\def\wideubar{\underaccent{{\cc@style\underline{\mskip8mu}}}}
\newtheorem{theorem}{Theorem}[section]
\newtheorem{proposition}{Proposition}[section]
\title{Globalized distributionally robust chance-constrained support vector machine based on core sets}
\author{Yueyao Li, Chenglong Bao, Wenxun Xing}
\date{}
\begin{document}

\maketitle

\begin{abstract}
Support vector machine (SVM) is a well known binary linear classification model in supervised learning. This paper proposes a globalized distributionally robust chance-constrained (GDRC) SVM model based on core sets to address uncertainties in the dataset and provide a robust classifier. The globalization means that we focus on the uncertainty in the sample population rather than the small perturbations around each sample point. The uncertainty is mainly specified by the confidence region of the first- and second-order moments. The core sets are constructed to capture some small regions near the potential classification hyperplane, which helps improve the classification quality via the expected distance constraint of the random vector to core sets. We obtain the equivalent semi-definite programming reformulation of the GDRC SVM model under some appropriate assumptions. To deal with the large-scale problem, an approximation approach based on principal component analysis is applied to the GDRC SVM. The numerical experiments are presented to illustrate the effectiveness and advantage of our model.
\end{abstract}

\section{Introduction}\label{sec:intro}
Support vector machine (SVM) is a maximum-margin classifier proposed by Cortes and Vapnik\cite{drc:svn}. As a well known supervised learning algorithm, SVM has been extensively studied and widely applied in biomedical science\cite{svm:bio}, image recognition\cite{svm:image}, energy forecasting\cite{svm:energy} and many other fileds.

Classical SVM models are constructed under the assumption that the values of the sample points are known exactly. However, in the real world, the data points may involve uncertainties caused by inaccurate measurements, sampling and modelling errors. For example, in image classification applications, the image noise leads to features uncertainty\cite{svm:Bi}; in case of gene/protein expression data, uncertainties arise due to biological heterogeneity\cite{svm:drcc:Ben-Tal}. Distribution shift can also be regarded as a kind of uncertainty of data distributions, in which training and testing data come from different distributions. It may result in poor performance of the classifier if ignoring the uncertainty in the data. 

Robust optimization, as a useful method handling data uncertainties, is successfully applied to SVM. In robust-optimization-based SVM model, the bounded perturbation is added to the value of the input and the constraints keep feasible for all perturbation values in the uncertainty set, see \cite{svm:ro:Trafalis,svm:ro:Bertsimas,svm:ro:Singla,svm:ro:Pant} for details. However, the characteristics of the robust optimization may lead to over-conservatism.

Chance-constrained SVM handles uncertainties via controlling the probability of misclassification under a given probability distribution for each sample point. The chance-constrainted model is often faced with the challenge of computational tractability, so that the approximation is usually adopted for computing. For example, Peng et al.\cite{svm:cc:Peng} proposed a chance-constrained conic-segmentation SVM model and derived a mixed integer programming formulation via sample average approximation. 

Futhermore, if the true probability distribution of input data is unknown, distributionally robust chance-constrained (DRC) SVM ensures small probability of misclassification for all probability distribution in an ambiguity set. The moment-based ambiguity set is the most commonly used in DRC SVM models due to computational tractability. 
Shivaswamy et al. \cite{svm:drcc:Shivaswamy}, Huang et al. \cite{svm:drcc:Huang} and Khanjani-Shiraz et al.\cite{svm:drcc:KS} derived second order cone program (SOCP) reformulations using Chebyshev inequality. Bhadra et al. \cite{svm:drcc:Bhadra} and Ben-Tal et al. \cite{svm:drcc:Ben-Tal} utilized Bernstein bounds to derive less conservative SOCP reformulations. In addition to using probability inequalities, other reformulations from the optimization perspective are also derived in \cite{svm:drcc:Wang}. 

The metric-based ambiguity set, especially Wasserstein-distance-based ambiguity set, is widely used in machine learning\cite{ml:wdro}. For DRC SVM, Ma and Wang\cite{svm:wdrcc} studied $l_2$ Wasserstein ambiguity and transformed the DRC SVM model into a 0-1 mixed-integer SOCP model. Other Wasserstein-distance-based distributionally robust approach for SVM, that is expectation constraint instead of chance constraint, is investigated in \cite{svm:dro:Lee,svm:dro:Li}.

The classical SVM, as well as DRC SVM, are usually faced with the challenge that the computational complexity depends on the sample size. They often consider the small perturbation near every sample data in existing research of DRC SVM. Actually, it is natural to assume that the sample points with the same label are independent and identically distributed and estimate moments of the sample population for each class. The two classes of sample data may have different probability distributions with different supports and moments so that they are separable. 
Thus, we can formulate a new model for SVM, in which only one distributionally robust chance constraint is constructed for each class of data. The ambiguity set of each class is constructed based on all sample data belonging to that class. The dependence of the number of model constraints on the sample size is naturally eliminated and it efficiently handles the computational difficulties brought by big data. 

This paper studies a new moment-based DRC SVM model, called globalized distributionally robust chance-constrained (GDRC) SVM. The globalization means that we consider the distributional uncertainties of the sample population for each class, instead of the small perturbation around each sample point. 
To guarantee the robustness of the optimal solution, the ambiguity set is constructed using the confidence regions of sample moments. Core-set constraints are introduced to overcome the potential of over-conservatism in moment-based DRC SVM model. Additionally, the key to the classification hyperplane is always the set of support vectors, that is a set of data points near the classification hyperplane. Therefore, we can pay more attention to the set of data points near the potential classification hyperplane when constructing the core sets, to improve the classification quality. In the ambiguity set, an expected distance constraint from the random vector to core sets is constructed to control the degree of attention paid to the core sets. 

The main contributions of this paper are summarized as follows.
\begin{itemize}
\item A new globalized moment-based DRC SVM model is proposed, which focuses on the statistical properties of the sample population for each class, rather than the small perturbations around each sample point. It significantly reduces the number of constraints and eliminates the dependence of computational complexity on the sample size, thereby effectively addressing the challenges of big data.
\item In the GDRC SVM model, the core sets are introduced to capture some small regions near the potential classification hyperplane, in which the support vector may be contained. The core-set constraint in the ambiguity set not only helps to reduce the conservatism, but also helps to improve the classification accuracy. 
\item The GDRC SVM model can be equivalently reformulated as a semi-definite programming (SDP) model. In order to overcome the disadvantage of too long calculation time required for large-scale SDP, an approximation for GDRC SVM model is introduced to deal with classification of high dimensional datasets. The approximation is based on the principal component analysis (PCA), so choosing appropriate principal components may also help to improve the generalization ability.
\end{itemize}

The remainder of this paper is organized as follows. 
In Section \ref{sec:svm}, we introduce the distributionally robust chance-constrained SVM model and our globalized formulation. Section \ref{sec:ref-app} contains the equivalent reformulation of GDRC SVM and its approximation based on PCA. Section \ref{sec:num} presents the numerical experiments and results on both simulated data and real datasets. Section \ref{sec:cons} concludes this paper.

\emph{Notations.} $\Xi \subseteq \mathbb{R}^n$ is the sample space, $\mathcal{B}(\Xi)$ is the Borel $\sigma$-algebra on $\Xi$, and $\mathcal{P}(\Xi)$ is the set of all probability measures or distributions on measurable space $(\Xi,\mathcal{B}(\Xi))$. $E_F[\cdot]$ represents the expectation with respect to the probability distribution $F$. $\mathbb{R}^n$ is the set of real $n$-dimensional vectors; and $\mathbb{R}^n_+$ is the set of non-negative vectors in $\mathbb{R}^n$. $\mathbf{1}_n \in \mathbb{R}^n$ is a vector of 1, $\bm{I}_n \in \mathbb{R}^{n \times n}$ is the identity matrix, and $\bm{0}_{m,n} \in \mathbb{R}^{m \times n}$ is the zero matrix. For two real matrices $\bm{A},\bm{B} \in \mathbb{R}^{m\times n}$, the Frobenius inner product is defined as $\bm{A} \cdot \bm{B} = tr(\bm{A}^{\top}\bm{B})$. 
For proper convex function $g:\mathbb{R}^n \to \mathbb{R}$, we let 
$\mathrm{dom}(g)=\{\bm{x} \in \mathbb{R}^n \vert g(\bm{x}) < \infty \}$. 
The convex conjugate function of $g$ is defined as
$g^*(\bm{y})=\sup \limits_{\bm{x}\in \mathrm{dom}(g)} \{\bm{y}^{\top} \bm{x} -g(\bm{x})\}.$
For a function $f(\cdot,\cdot)$ with two variables, let $f^{**}(\cdot,\cdot)$ denote the convex conjugate function with respect to both variables. The indicator function for the set $S$ is defined as follows:
\begin{equation*}
   \delta(\bm{x}\vert S)=
   \left\{ \begin{aligned}  0 \qquad &\mathrm{if} \quad \bm{x} \in S, \\  
\infty \qquad & \mathrm{otherwise}. \end{aligned}\right.
\end{equation*}
The 0-1 indicator function for the set $S$ is defined as follows:
\begin{equation*}
   \mathds{1}_S(\bm{x})=
   \left\{ \begin{aligned}  1 \qquad &\mathrm{if} \quad \bm{x} \in S, \\  0 \qquad & \mathrm{otherwise}. \end{aligned}\right.
\end{equation*}

\section{Distributionally robust chance-constrained SVM}
\label{sec:svm}
Given a dataset of $N = N_1 + N_2$ sample points, it is composed of two classes of data $\{(\bm{x}_1^i,y_1)\}_{i=1}^{N_1} \cup \{(\bm{x}_2^i,y_2)\}_{i=1}^{N_2}$, where $y_1 =1$ and $y_2 =-1$ are the labels representing the classes, $k=1$ and $k=2$ represent the class $+1$ and $-1$ respectively, and $\bm{x}_k^i \in \mathbb{R}^n$ is the $i$th feature vector belonging to the $k$th class. It is linearly separable if there exists a hyperplane $\bm{w}^\top \bm{x} + b =0$ separating the sample points into two classes. The classification is determined by the sign of $\bm{w}^\top\bm{x} + b$, that is, $\bm{x}$ belongs to class $+1$ if $\bm{w}^\top\bm{x} + b > 0$, otherwise it belongs to class $-1$.
Linear SVM aims to find the maximum-margin hyperplane and it is usually formulated as 
\begin{equation}\label{eq:svm}
\begin{aligned}
\min \quad & \frac{1}{2} ||\bm{w}||_2^2 + C \left(\sum \limits_{i=1}^{N_1} \xi^i_1 + \sum \limits_{i=1}^{N_2} \xi^i_2 \right) \\
s.t. \quad & y_k(\bm{w}^\top \bm{x}^i_k + b) \geqslant 1-\xi^i_k,i=1,\cdots,N_k,k=1,2 \\
& \bm{w} \in \mathbb{R}^n,  b\in \mathbb{R}, \bm{\xi}_1 \in \mathbb{R}^{N_1}_+, \bm{\xi}_2 \in \mathbb{R}^{N_2}_+.
\end{aligned}
\end{equation}
The model (\ref{eq:svm}) is also called soft-margin SVM, because the data point is allowed to be  within-margin ($0 < \xi^i_k \leqslant 1$) or misclassified ($\xi^i_k>1$). 

When considering the uncertainties in the input data, $\tilde{\bm{x}}^i_k \in \mathbb{R}^n$ denotes the random vector of the data point that follows the distribution $F^i_k$. For each data point, the constraint is allowed to be violated with probability no greater than $\varepsilon$. Then the chance-constrained SVM is developed and studied. However, it is often difficult to obtain the true probability distribution of uncertain data and to solve the chance-constrained model. 
The idea of distributionally robust optimization is further introduced, and DRC SVM model is usually formulated as
\begin{equation}\label{eq:drcc_svm}
\begin{aligned}
\min \quad & \frac{1}{2} ||\bm{w}||_2^2 + C \left(\sum \limits_{i=1}^{N_1} \xi^i_1 + \sum \limits_{i=1}^{N_2} \xi^i_2 \right)  \\
s.t. \quad & \sup \limits_{F^i_k \in \mathcal{D}^i_k} \mathbb{P}_{F^i_k} \left\{ y_k(\bm{w}^\top \tilde{\bm{x}}^i_k + b) \leqslant 1-\xi^i_k \right\} \leqslant \varepsilon,i=1,\cdots,N_k,k=1,2 \\
& \bm{w} \in \mathbb{R}^n, b\in \mathbb{R}, \bm{\xi}_1 \in \mathbb{R}^{N_1}_+, \bm{\xi}_2 \in \mathbb{R}^{N_2}_+.
\end{aligned}
\end{equation}
When the ambiguity set $\mathcal{D}^i_k$ contains only one distribution, it reduces to the chance-constrained SVM. 
In DRC SVM model, the ambiguity set with known mean and covariance matrix is widely used, such as \cite{svm:drcc:Shivaswamy,svm:drcc:Wang,svm:drcc:Huang}.
\begin{equation}\label{eq:ambiguity_known}
   \mathcal{D}^i_k =
   \left\{
   F^i_k \in \mathcal{P}(\mathbb{R}^n)
   \middle\vert
   \begin{array}{lcl}
    & E_{F^i_k}[\tilde{\bm{x}}^i_k] = \bm{\mu}^i_k  \\
    & E_{F^i_k}[(\tilde{\bm{x}}^i_k-\bm{\mu}^i_k)(\tilde{\bm{x}}^i_k-\bm{\mu}^i_k)^{\top}] = \bm{\Sigma}^i_k \\
    & \mathbb{P}_{F^i_k}(\tilde{\bm{x}}^i_k \in \mathbb{R}^n) = 1
   \end{array}
   \right\}, \begin{array}{lcl} &i=1,\cdots,N_k, \\ &k=1,2.\end{array}
\end{equation}

In the models (\ref{eq:svm}) and (\ref{eq:drcc_svm}), the number of constraints is equal to the sample size $N$, so that the computational cost is high when the sample size is large. Hence, there are some works dedicated to dealing with this problem. To handling big datasets with unlabeled data, Huang et al. \cite{svm:drcc:Huang} partitioned the sample data using the kNN algorithm, then designed a merging algorithm to merge some subsets, and finally obtained $p$ subsets. Thus, they established a DRC model as (\ref{eq:drcc_svm}), replacing the number of constraints $N$ with the number of subsets $p$. It is an efficient method to reduce the computational complexity of DRC SVM. Therefore, we wonder how it would perform if there were only two subsets, i.e., constructing only one distributionally robust chance constraint for each class of data points.

In SVM model, if two classes of data points are separable, they should come from different distributions, in particular, they may have different support sets, means and variances. 
In this study, we assume that the data points with the same label are independently and identically distributed. It is a reasonable assumption in SVM. If the data with the same label is multimodal, the probability distribution for this class can be regarded as a mixed distribution and the multimodality can be characterized in the ambiguity set. In the context of big data, we can obtain good estimates of the sample moments, and it is natural to formulate the following globalized distributionally robust chance-constrained (GDRC) model.
\begin{equation}\label{eq:gdrcc_svm}
\begin{aligned}
\min \quad & \frac{1}{2} ||\bm{w}||_2^2 + C ( \xi_1 + \xi_2) \\
s.t. \quad  & \sup \limits_{F_k \in \mathcal{D}_k}  \mathbb{P}_{F_k}\{y_k(\bm{w}^\top \tilde{\bm{x}}_k + b) \leqslant 1-\xi_k\} \leqslant \varepsilon , k=1,2\\
& \bm{w} \in \mathbb{R}^n,  b\in \mathbb{R}, \xi_1,\xi_2 \in \mathbb{R}_+,
\end{aligned}
\end{equation}

We refer to the model (\ref{eq:gdrcc_svm}) as GDRC SVM. The globalization means that we care about the uncertainty of the probability distribution of each class. The ambiguity set is defined as
\begin{equation}\label{eq:ambiguity}
   \mathcal{D}_k =
   \left\{
   F_k \in \mathcal{P}(\Xi_k)
   \middle\vert
   \begin{array}{lcl}
    & (E_{F_k}[\tilde{\bm{x}}_k]-\bm{\mu}_k)^{\top}\bm{\Sigma}_k^{-1}(E_{F_k}[\tilde{\bm{x}}_k]-\bm{\mu}_k) \leqslant \gamma_{k1}  \\
    & E_{F_k}[(\tilde{\bm{x}}_k-\bm{\mu}_k)(\tilde{\bm{x}}_k-\bm{\mu}_k)^{\top}] \preceq \gamma_{k2} \bm{\Sigma}_k\\
    & E_{F_k}[\min \limits_{j=1,\cdots,m_k} \theta_{kj} \min \limits_{\tilde{\bm{x}}_k' \in Y_{kj}} \phi_{kj}(\tilde{\bm{x}}_k,\tilde{\bm{x}}_k')] \leqslant 1 \\
    & \mathbb{P}_{F_k}(\tilde{\bm{x}}_k \in \Xi_k) = 1
   \end{array}
   \right\}, \quad k=1,2. 
\end{equation}
For the $k$th class, $\Xi_k \subset \mathbb{R}^n$ is the sample space; $\bm{\mu}_{k}$ and $\bm{\Sigma}_k$ represent the sample mean and sample covariance matrix, respectively; $\gamma_{k1}$ and $\gamma_{k2}$ are parameters determined by the confidence level; $Y_{kj} \subset \Xi_k,j=1,\cdots,m_k$ are core sets; $\phi_{kj},j=1,\cdots,m_k$ are closed, jointly convex and non-negative functions, satisfying $\phi_{kj}(\tilde{\bm{x}},\tilde{\bm{x}})=0$ for all $\tilde{\bm{x}} \in \mathbb{R}^n$, which are used to measure the distance between $\tilde{\bm{x}}$ and $\tilde{\bm{x}}'$; $\theta_{kj},j=1,\cdots,m_k$ are some given constants to characterize the degree of attention paid to the core sets. The core set $Y_{kj}$ is defined as 
\begin{equation*}
Y_{kj} = \left\{\hat{\bm{x}}_{kj} + \bm{A}_{kj} \bm{\zeta} \middle \vert ||\bm{\zeta}||_{p_{kj}} \leqslant \sqrt{\bar{\gamma}_{kj}}\right\},
\end{equation*}
where $\hat{\bm{x}}_{kj}$ denotes the center of the core set, $\bm{A}_{kj}$ is the perturbation matrix, $\bm{\zeta}$ is the random vector of ``primitive uncertainties", and $\bar{\gamma}_{kj}$ describes the size of the core set. 

In the ambiguity set (\ref{eq:ambiguity}), the first two constraints capture the confidence region of the mean and covariance matrix. The third constraint, that is the core-set constraint, limits the minimum expected distance from the random vector to all core sets. Thus, it forces that the probability of that $\tilde{\bm{x}}_k$ is farther from $\cup_{j=1}^{m_k} Y_{kj}$ is smaller, thereby, it strengthens the influence of core sets. Especially, the parameter $\theta_{kj}$ can control the degree of importance of the core set $Y_{kj}$. 

The core-set constraint plays two important roles in the GDRC SVM model. Firstly, the GDRC model maybe over-conservative if the ambiguity set determined by only mean and covariance matrix constraints contains too many unrealistic distributions. The core-set constraint helps reduce the conservatism by restricting high-density distribution regions. Secondly, the core sets $Y_{kj} \subset \Xi_k,j=1,\cdots,m_k,k=1,2$ are constructed to capture some important regions that may have a greater impact on the classification hyperplane to help improve the classification quality. As we all know, the optimal classification hyperplane provided by the SVM model is determined only by the support vectors. Consider the Lagrangian of (\ref{eq:svm}):
\begin{equation*}
\mathcal{L} = \frac{1}{2} ||\bm{w}||_2^2 + C \sum \limits_{k=1}^2 \sum \limits_{i=1}^{N_k} \left( \xi_k^i + \alpha_k^i(1-\xi_k^i-y_k(\bm{w}^\top \bm{x}_k^i + b))- \beta_k^i \xi_k^i \right)
\end{equation*}
The KKT conditions give that
\begin{equation*}
\bm{w} = \sum \limits_{k=1}^2 \sum \limits_{i=1}^{N_k} \alpha_k^i y_k \bm{x}_k^i, \quad
\sum \limits_{k=1}^2 \sum \limits_{i=1}^{N_k}\alpha_k^i y_k = 0, 
\end{equation*}
\begin{equation*}
\alpha_k^i(1-\xi_k^i -y_k(\bm{w}^T \bm{x}_k^i + b)) = 0, \quad
\beta_k^i \xi_k^i = 0.
\end{equation*}
Note that $\bm{w}$ is a linear combination of sample points. However, not all of sample points contribute to the classification hyperplane. Actually, we only need to pay attention to $\bm{x}_k^i$ with corresponding $\alpha_k^i \neq 0$. According to the complementary conditions, 
\begin{equation*}
\alpha_k^i \neq 0 \Rightarrow y_k(\bm{w}^T \bm{x}_k^i + b) = 1-\xi_k^i ,
\end{equation*}
which means that $\bm{x}_k^i$ is either on the margin or misclassified. It indicates that the sample points near the potential classification hyperplane play a more important role in the process of finding the hyperplane. Therefore, we use core sets to capture the regions near the potential classification hyperplane. 

It is hard to say which is more robust or more conservative between the DRC model (\ref{eq:drcc_svm}) with the ambiguity set (\ref{eq:ambiguity_known}) and the GDRC model (\ref{eq:gdrcc_svm}) with the ambiguity set (\ref{eq:ambiguity}). The DRC model (\ref{eq:drcc_svm}) processes the perturbation around each sample point and it ususally adopts small covariance matrix in the ambiguity set (\ref{eq:ambiguity_known}) to avoid too large robust region. The GDRC model (\ref{eq:gdrcc_svm}) processes the perturbation of the probability distribution of all sample points belonging to the same class, and the confidence regions of the mean and covariance matrix are adopted in (\ref{eq:ambiguity}). The determination of the parameters $\gamma_{k1},\gamma_{k2}$ can refer to \cite{dro:Delage}. Additionally, in the GDRC SVM model, the core-set constraint is constructed to avoid over-conservatism and improve the classification quality. 

\section{Reformulation and approximation of GDRC SVM}\label{sec:ref-app}
\subsection{SDP reformulation of GDRC SVM}
Assume that $\Xi_k = \mathbb{R}^n$. It follows from \cite{gdro} that $\mathbb{R}^n$ is a good choice if there is no prior information about the support of the probability distribution. The core-set constraint characterizes the data distribution area that decision makers are more concerned about, thereby reducing the degree of conservatism that maybe caused by the sample space being the entire space. We have the following theorem.

\begin{theorem}\label{thm:ref}
Assume that $\bm{\mu}_{k} \in Y_{kj}$, 
the distributionally robust chance constraint 
\begin{equation}\label{eq:gdrcc}
\sup \limits_{F_k \in \mathcal{D}_k}  \mathbb{P}_{F_k}\{y_k(\bm{w}^\top \tilde{\bm{x}}_k + b) \leqslant 1-\xi_k\} \leqslant \varepsilon ,
\end{equation}
with the ambiguity set (\ref{eq:ambiguity}), can be equivalently reformulated as:
\begin{equation} \label{eq:gdrcc_ref}
\left\{
\begin{aligned}
&  t_k \!+ \!\bm{\Lambda}_k \cdot (\gamma_{k2} \bm{\Sigma}_k  \!+ \!\bm{\mu}_k \bm{\mu}_k^{\top}) \!+ \! \sqrt{\gamma_{k1}}||\bm{\Sigma}_k^{1/2} (\bm{q}_k + 2\bm{\Lambda}_k \bm{\mu}_k)||_2 \! + \! \bm{q}_k^{\top} \bm{\mu}_k  \!+ \! r_k \leqslant \varepsilon \tau_k \\
& \begin{bmatrix} \bm{\Lambda}_k & \frac{1}{2}(\bm{v}_{kj}+\bm{q}_k+ y_k \bm{w})\\[1em]
\frac{1}{2}(\bm{v}_{kj}+\bm{q}_k+y_k\bm{w})^{\top} & \begin{aligned} & t_k - \tau_k + y_k b - 1+\xi_k - \delta^*(\bm{v}_{kj}\vert Y_{kj}) \\ &  -(r_k \theta_{kj} \phi_{kj})^{**}(\bm{v}_{kj},-\bm{v}_{kj}) \end{aligned} \end{bmatrix} \succeq 0 \\[1em]
& \begin{bmatrix} \bm{\Lambda}_k & \frac{1}{2}(\bm{u}_{kj}+\bm{q}_k)\\[1em]
\frac{1}{2}(\bm{u}_{kj}+\bm{q}_k)^{\top} & t_k - \delta^*(\bm{u}_{kj}\vert Y_{kj}) -(r_k \theta_{kj} \phi_{kj})^{**}(\bm{u}_{kj},-\bm{u}_{kj})  \end{bmatrix} \succeq 0 \\
& \bm{v}_{kj},\bm{u}_{kj} \in \mathbb{R}^n,\quad j =1,\cdots,m_k,\\
& \bm{\Lambda}_k \in \mathbb{S}_+^n, \bm{q}_k \in \mathbb{R}^n,  t_k \in\mathbb{R}, r_k,\tau_k \in\mathbb{R}_+.  \\
\end{aligned}
\right.
\end{equation}
\end{theorem}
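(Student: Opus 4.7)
The plan is to reformulate the distributionally robust chance constraint (\ref{eq:gdrcc}) by combining two classical tools: a Markov-type probability-to-expectation majorization and Lagrangian/conic duality over the moment-based ambiguity set. Let $A=\{\bm{x}:y_k(\bm{w}^\top\bm{x}+b)\leqslant 1-\xi_k\}$. For any scalar $\tau_k\geqslant 0$ and any measurable $h$ with $h(\bm{x})\geqslant \tau_k\mathds{1}_A(\bm{x})$, the bound $\tau_k\mathbb{P}_F(A)\leqslant E_F[h]$ shows that $\sup_{F\in\mathcal{D}_k}E_F[h]\leqslant \varepsilon\tau_k$ implies the DRCC. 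Conversely, by strong duality for the generalized moment problem (valid under the Slater condition that follows from $\bm{\mu}_k\in Y_{kj}$, since the Dirac mass at $\bm{\mu}_k$ is strictly feasible in $\mathcal{D}_k$), the DRCC is in fact equivalent to the existence of such a pair $(h,\tau_k)$ when $h$ ranges over the natural span of the moment and core-set functionals.

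I would therefore take $h(\bm{x})=\bm{x}^\top\bm{\Lambda}_k\bm{x}+\bm{q}_k^\top\bm{x}+t_k+r_k\min_{j=1,\dots,m_k}\theta_{kj}\psi_{kj}(\bm{x})$ with $\bm{\Lambda}_k\in\mathbb{S}_+^n$, $r_k\geqslant 0$, and $\psi_{kj}(\bm{x}):=\min_{\bm{x}'\in Y_{kj}}\phi_{kj}(\bm{x},\bm{x}')$. Expanding $\bm{\Lambda}_k\cdot E_F[\bm{x}\bm{x}^\top]$ around $\bm{\mu}_k$ in order to invoke the covariance bound $E_F[(\bm{x}-\bm{\mu}_k)(\bm{x}-\bm{\mu}_k)^\top]\preceq\gamma_{k2}\bm{\Sigma}_k$, collecting the linear-in-$E_F[\bm{x}]$ residual and taking its supremum over the mean ellipsoid (which produces the $\sqrt{\gamma_{k1}}\|\bm{\Sigma}_k^{1/2}(\bm{q}_k+2\bm{\Lambda}_k\bm{\mu}_k)\|_2$ term), and using $E_F[\min_j\theta_{kj}\psi_{kj}(\bm{x})]\leqslant 1$ to bound the core-set contribution by $r_k$, the computation of $\sup_{F\in\mathcal{D}_k}E_F[h]$ gives exactly the left-hand side of the first inequality in (\ref{eq:gdrcc_ref}), to be $\leqslant \varepsilon\tau_k$.

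For the pointwise majorization $h\geqslant \tau_k\mathds{1}_A$ I would exploit $h=\min_j h_j$ with $h_j(\bm{x}):=\bm{x}^\top\bm{\Lambda}_k\bm{x}+\bm{q}_k^\top\bm{x}+t_k+r_k\theta_{kj}\psi_{kj}(\bm{x})$, decoupling the requirement into $m_k$ pairs of semi-infinite constraints: $h_j\geqslant \tau_k$ on $A$ and $h_j\geqslant 0$ on $\mathbb{R}^n$. The ``on $A$'' condition is lifted to a global inequality by the S-procedure against the affine slack $1-\xi_k-y_k(\bm{w}^\top\bm{x}+b)$, with the S-multiplier normalized to $1$ by rescaling $(\bm{\Lambda}_k,\bm{q}_k,t_k,r_k,\tau_k)$; this yields the $+y_k\bm{w}$ in the $(1,2)$ block and the $+y_k b-1+\xi_k$ in the $(2,2)$ block of the second SDP. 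To eliminate the non-quadratic term $r_k\theta_{kj}\psi_{kj}(\bm{x})$, I would rewrite it as $\inf_{\bm{x}'\in Y_{kj}}r_k\theta_{kj}\phi_{kj}(\bm{x},\bm{x}')$ and apply infimal-convolution calculus to the conjugate of $r_k\theta_{kj}\phi_{kj}+\delta(\cdot|Y_{kj})$ viewed jointly in $(\bm{x},\bm{x}')$, invoking the key observation that $\phi_{kj}(\bm{x},\bm{x})=0$ forces the joint conjugate $\phi_{kj}^{**}(\bm{v},\bm{w})$ to be $+\infty$ unless $\bm{v}+\bm{w}=0$; this collapses the two natural dual vectors into a single $\bm{v}_{kj}$ (respectively $\bm{u}_{kj}$) and produces precisely the terms $\delta^*(\bm{v}_{kj}|Y_{kj})$ and $(r_k\theta_{kj}\phi_{kj})^{**}(\bm{v}_{kj},-\bm{v}_{kj})$. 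A Schur complement then turns each resulting ``quadratic plus linear plus constant $\geqslant 0$'' inequality in $\bm{x}$ into the matrix SDP constraints displayed in (\ref{eq:gdrcc_ref}).

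The principal obstacle is proving the \emph{equivalence} (rather than mere sufficiency) of the reformulation: three separate strong-duality arguments must hold tightly — for the outer generalized moment problem, for the S-procedure converting the half-space restriction into a global quadratic inequality with a convex nonquadratic addend, and for the inner Fenchel dualization of $\psi_{kj}$ via the infimal-convolution representation that yields the $\delta^*$ and $\phi_{kj}^{**}$ terms. Each is licensed by the combined hypotheses of the theorem: the assumption $\bm{\mu}_k\in Y_{kj}$ provides strictly feasible primal points (the Dirac at $\bm{\mu}_k$ for the outer problem and $\bm{\mu}_k$ itself for each inner $\min_{\bm{x}'\in Y_{kj}}$), while closedness, joint convexity, non-negativity and vanishing-on-the-diagonal of $\phi_{kj}$ make the conjugacy calculus valid and duality gap-free. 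I expect the bulk of the technical care to lie in verifying these Slater-type qualifications so that the ``sufficient'' and ``necessary'' directions of each step genuinely coincide.
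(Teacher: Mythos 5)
Your proposal follows essentially the same route as the paper: dualize the generalized moment problem over (\ref{eq:ambiguity}) to obtain a quadratic-plus-core-set majorant of the indicator (the Delage--Ye computation that produces the first inequality of (\ref{eq:gdrcc_ref})), split the semi-infinite majorization into the ``on $A$'' and ``on $\mathbb{R}^n$'' families indexed by $j$, lift the half-space-restricted inequality to a global one via a convex Farkas/S-procedure multiplier, dualize the inner $\min_{\tilde{\bm{x}}_k'\in Y_{kj}}\phi_{kj}$ through the joint conjugate (your observation that $\phi_{kj}(\bm{x},\bm{x})=0$ forces the joint conjugate to be $+\infty$ off the diagonal $\bm{v}+\bm{w}=0$ is exactly what collapses the two dual vectors into the single $\bm{v}_{kj}$, resp.\ $\bm{u}_{kj}$), and finish with a Schur complement. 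In architecture and in the identification of the three duality steps, this matches the paper's proof.

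The one genuine gap is the normalization of the S-multiplier. You say it is ``normalized to $1$ by rescaling $(\bm{\Lambda}_k,\bm{q}_k,t_k,r_k,\tau_k)$,'' but that division is legitimate only if the multiplier is strictly positive, and your opening implication ($h\geqslant\tau_k\mathds{1}_A$ together with $\sup_{F}E_F[h]\leqslant\varepsilon\tau_k$ yields the chance constraint) likewise collapses at $\tau_k=0$. This is precisely where the paper spends the hypothesis $\bm{\mu}_k\in Y_{kj}$: since $\phi_{kj}(\bm{\mu}_k,\bm{\mu}_k)=0$, evaluating the lifted global inequality at $\tilde{\bm{x}}_k=\bm{\mu}_k$ with a zero multiplier gives $\bm{\mu}_k^{\top}\bm{\Lambda}_k\bm{\mu}_k+\bm{q}_k^{\top}\bm{\mu}_k+t_k\geqslant 1$, while the dual-objective bound combined with $\bm{\Lambda}_k\succeq 0$ and $r_k\geqslant 0$ forces $\bm{\mu}_k^{\top}\bm{\Lambda}_k\bm{\mu}_k+\bm{q}_k^{\top}\bm{\mu}_k+t_k\leqslant\varepsilon<1$, a contradiction. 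Your proposal instead assigns $\bm{\mu}_k\in Y_{kj}$ the role of a Slater point (the Dirac mass at $\bm{\mu}_k$ for the outer moment problem); that is a reasonable auxiliary use, but the load-bearing application is the nonvanishing of the multiplier, without which the bilinear term $\tau_k\,y_k(\bm{w}^{\top}\tilde{\bm{x}}_k+b)$ cannot be decoupled and the claimed equivalence with (\ref{eq:gdrcc_ref}) does not follow. Supplying this short contradiction argument would close the gap.
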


\begin{proof}
The worst-case probability can be rewritten as
\begin{equation}\label{eq:ldrcc}
\begin{split}
\sup \limits_{F_k \in \mathcal{D}_k} & \mathbb{P}_{F_k}\{y_k (\bm{w}^\top \tilde{\bm{x}}_k + b) \leqslant 1-\xi_k\}  \\
= \sup \limits_{\mu_k \in \mathcal{M}_+(\mathbb{R}^n)} & \int_{\mathbb{R}^n} \mathds{1}_{
\{y_k(\bm{w}^T \tilde{\bm{x}}_k + b) \leqslant 1-\xi_k\}} \text{d}\mu_k(\tilde{\bm{x}}_k) \\
s.t. \quad & \int_{\mathbb{R}^n} \text{d}\mu_k(\tilde{\bm{x}}_k)  = 1\\
& \int_{\mathbb{R}^n} \begin{bmatrix} \bm{\Sigma}_k & \tilde{\bm{x}}_k -\bm{\mu}_k \\(\tilde{\bm{x}}_k -\bm{\mu}_k)^\top & \gamma_{k1} \end{bmatrix} \text{d}\mu_k(\tilde{\bm{x}}_k) \succeq 0\\
& \int_{\mathbb{R}^n} (\tilde{\bm{x}}_k-\bm{\mu}_k)(\tilde{\bm{x}}_k-\bm{\mu}_k)^\top \text{d}\mu_k(\tilde{\bm{x}}_k)  \preceq \gamma_{k2} \bm{\Sigma}_k \\
& \int_{\mathbb{R}^n} \min \limits_{j=1,\cdots,m_k} \theta_{kj} \min \limits_{\tilde{\bm{x}}_k' \in Y_{kj}} \phi_{kj}(\tilde{\bm{x}}_k,\tilde{\bm{x}}_k') \text{d}\mu_k(\tilde{\bm{x}}_k)  \leqslant 1,\\
\end{split}
\end{equation}
where $\mathcal{M}_+(\mathbb{R}^n)$ denotes the set of all finite positive measures on $\mathbb{R}^n$. Similarly to the proof of \cite[Lemma 1]{dro:Delage}, the dual of (\ref{eq:ldrcc}) is written as

\begin{small}
\begin{subequations}
\begin{align}
\min \limits_{\bm{\Lambda}_k,\bm{q}_k,t_k,r_k}  & t_k \! + \! \bm{\Lambda}_k \! \cdot \! (\gamma_{k2} \bm{\Sigma}_k \! +\! \bm{\mu}_k \bm{\mu}_k^{\top})\!+ \!\sqrt{\gamma_{k1}}||\bm{\Sigma}_k^{1/2} (\bm{q}_k\! +\! 2\bm{\Lambda}_k \bm{\mu}_k)||_2 \!+\! \bm{q}_k^{\top} \bm{\mu}_k \!+\! r_k \\
s.t. \quad & \mathds{1}_{\{y_k(\bm{w}^\top\tilde{\bm{x}}_k  +  b) \leqslant 1-\xi_k\}}-  r_k  \min \limits_{j=1,\cdots,m_k} \theta_{kj} \min \limits_{\tilde{\bm{x}}_k' \in Y_{kj}} \phi_{kj}(\tilde{\bm{x}}_k,\tilde{\bm{x}}_k') \notag \\ 
& \qquad \qquad \qquad \qquad \qquad \qquad - \tilde{\bm{x}}_k^{\top}\bm{\Lambda}_k \tilde{\bm{x}}_k  - \bm{q}_k^{\top} \tilde{\bm{x}}_k \leqslant t_k \quad \forall \tilde{\bm{x}}_k \in \mathbb{R}^n, \label{eq:cons}\\
& \bm{\Lambda}_k \in \mathbb{S}_+^n,  \bm{q}_k \in \mathbb{R}^n,  t_k\in\mathbb{R},  r_k \in\mathbb{R}_+ .
\end{align}
\end{subequations}
\end{small}

It follows from \cite[Proposition 3.4]{Shapiro} that the strong duality holds and the optimal values are equal. Due to the non-negativity of $\phi_{kj}$ and semi-positive definiteness of $\bm{\Lambda}_k$, we know that (\ref{eq:cons}) is equivalent to 
{\small
\begin{subequations}
\begin{numcases}{} 
 \tilde{\bm{x}}_k^{\top}\bm{\Lambda}_k \tilde{\bm{x}}_k + \bm{q}_k^{\top} \tilde{\bm{x}}_k+ t_k + r_k \min \limits_{j=1,\cdots,m_k} \theta_{kj} \min \limits_{\tilde{\bm{x}}_k' \in Y_{kj}} \phi_{kj}(\tilde{\bm{x}}_k,\tilde{\bm{x}}_k') - 1 \geqslant 0 \notag \\
\qquad \qquad \qquad \qquad \qquad \qquad \qquad \qquad \qquad \forall \tilde{\bm{x}}_k \in \{\tilde{\bm{x}}_k \vert y_k(\bm{w}^\top \tilde{\bm{x}}_k + b) \leqslant 1-\xi_k\} \label{eq:cons1}\\
 \tilde{\bm{x}_k}^{\top}\bm{\Lambda}_k \tilde{\bm{x}}_k + \bm{q}_k^{\top} \tilde{\bm{x}}_k+ t_k + r_k \min \limits_{j=1,\cdots,m_k} \theta_{kj} \min \limits_{\tilde{\bm{x}}_k' \in Y_{kj}} \phi_{kj}(\tilde{\bm{x}}_k,\tilde{\bm{x}}_k') \geqslant 0 \quad \forall \tilde{\bm{x}}_k \in \mathbb{R}^n .
\end{numcases}
\end{subequations}
}
Notice that $f_0(\tilde{\bm{x}}_k) = \tilde{\bm{x}}_k^{\top}\bm{\Lambda} \tilde{\bm{x}}_k + \bm{q}^{\top} \tilde{\bm{x}}_k+ t + r \min \limits_{j=1,\cdots,m_k} \theta_{kj} \min \limits_{\tilde{\bm{x}}_k' \in Y_{kj}} \phi_{kj}(\tilde{\bm{x}}_k,\tilde{\bm{x}}_k') - 1$ is convex in $\tilde{\bm{x}}_k$, and $f_1(\tilde{\bm{x}}_k) = \bm{w}^\top \tilde{\bm{x}}_k + b - 1+\xi_k$ is linear in $\tilde{\bm{x}}_k$. According to Farkas theorem\cite[Theorem 21.1]{cov_ana}, the inequality (\ref{eq:cons1}) holds if and only if there exists $ \tau_k \geqslant 0$ such that
\begin{equation*}
\begin{aligned}
& \tilde{\bm{x}}_k^{\top}\bm{\Lambda}_k \tilde{\bm{x}}_k + \bm{q}_k^{\top} \tilde{\bm{x}}_k+ t_k + r_k \min \limits_{j=1,\cdots,m_k} \theta_{kj} \min \limits_{\tilde{\bm{x}}_k' \in Y_{kj}} \phi_{kj}(\tilde{\bm{x}}_k,\tilde{\bm{x}}_k') - 1 \\
& \qquad \qquad \qquad \qquad \qquad \qquad + \tau_k(y_k(\bm{w}^\top \tilde{\bm{x}}_k + b) - 1+\xi_k) \geqslant 0 \quad \forall \tilde{\bm{x}}_k \in \mathbb{R}^n.
\end{aligned}
\end{equation*}
Therefore, the distributionally robust chance constraint (\ref{eq:gdrcc}) is equivalent to the following series of inequalities.
{\small
\begin{subequations}\label{eq:redrcc}
\begin{numcases}{} 
 t_k \! +\bm{\Lambda}_k \! \cdot \! (\gamma_{k2} \bm{\Sigma}_k \! +\! \bm{\mu}_k \bm{\mu}_k^{\top})+ \sqrt{\gamma_{k1}}||\bm{\Sigma}_k^{1/2} (\bm{q}_k\! +\! 2\bm{\Lambda}_k \bm{\mu}_k)||_2 + \bm{q}_k^{\top} \bm{\mu}_k + r_k \leqslant \varepsilon \label{eq:redrcc_a} \\
 \tilde{\bm{x}}_k^{\top}\bm{\Lambda}_k \tilde{\bm{x}}_k + \bm{q}_k^{\top} \tilde{\bm{x}}_k+ t_k + r_k \min \limits_{j=1,\cdots,m_k} \theta_{kj} \min \limits_{\tilde{\bm{x}}_k' \in Y_{kj}} \phi_{kj}(\tilde{\bm{x}}_k,\tilde{\bm{x}}_k') - 1 \notag \\
 \qquad \qquad \qquad \qquad \qquad \qquad \quad + \tau_k(y_k(\bm{w}^\top \tilde{\bm{x}}_k + b) - 1+\xi_k) \geqslant 0 \quad \forall \tilde{\bm{x}}_k \in \mathbb{R}^n \label{eq:redrcc_b}\\
 \tilde{\bm{x}}_k^{\top}\bm{\Lambda}_k \tilde{\bm{x}}_k + \bm{q}_k^{\top} \tilde{\bm{x}}_k+ t_k + r_k \min \limits_{j=1,\cdots,m_k} \theta_{kj} \min \limits_{\tilde{\bm{x}}_k' \in Y_{kj}} \phi_{kj}(\tilde{\bm{x}}_k,\tilde{\bm{x}}_k') \geqslant 0 \quad \forall \tilde{\bm{x}}_k \in \mathbb{R}^n \\
 \bm{\Lambda}_k \in \mathbb{S}_+^n, \bm{q}_k \in \mathbb{R}^n,  t_k\in\mathbb{R},  r_k,\tau_k \in\mathbb{R}_+ .
\end{numcases}
\end{subequations}
}
To decouple the bilinear terms $\tau_k(y_k(\bm{w}^\top \tilde{\bm{x}}_k + b) - 1+\xi_k)$, we can perform variable substitutions in (\ref{eq:redrcc}), that is, $\bm{\Lambda}_k = \frac{1}{\tau_k} \bm{\Lambda}_k$, $\bm{q}_k= \frac{1}{\tau_k} \bm{q}_k$, $t_k = \frac{1}{\tau_k} t_k$, $r_k = \frac{1}{\tau_k} r_k$, $\tau_k = \frac{1}{\tau_k}$. Before that, it needs to verify that $\tau_k$ cannot be zero. If $\tau_k=0$, it follows from (\ref{eq:redrcc_b}) and $\bm{\mu}_k \in Y_{kj}$ that $ \bm{\mu}_k^{\top}\bm{\Lambda}_k \bm{\mu}_k + \bm{q}_k^{\top} \bm{\mu}_k+ t_k  - 1  \geqslant 0$. It follows from (\ref{eq:redrcc_a}) that $ \bm{\mu}_k^{\top}\bm{\Lambda}_k \bm{\mu}_k + \bm{q}_k^{\top} \bm{\mu}_k+ t_k  \leqslant \varepsilon $. Thus, it produces a contradiction with $0 < \varepsilon < 1$. Performing variable substitutions in (\ref{eq:redrcc}), the reformulation of (\ref{eq:gdrcc}) becomes
{\small
\begin{subequations}\label{eq:redrcc2}
\begin{numcases}{} 
 t_k \! +\bm{\Lambda}_k \! \cdot \! (\gamma_{k2} \bm{\Sigma}_k \! +\! \bm{\mu}_k \bm{\mu}_k^{\top})+ \sqrt{\gamma_{k1}}||\bm{\Sigma}_k^{1/2} (\bm{q}_k\! +\! 2\bm{\Lambda}_k \bm{\mu}_k)||_2 + \bm{q}_k^{\top} \bm{\mu}_k + r_k \leqslant \varepsilon \tau_k  \\
 \tilde{\bm{x}}_k^{\top}\bm{\Lambda}_k \tilde{\bm{x}}_k + \bm{q}_k^{\top} \tilde{\bm{x}}_k+ t_k + r_k \min \limits_{j=1,\cdots,m_k} \theta_{kj} \min \limits_{\tilde{\bm{x}}_k' \in Y_{kj}} \phi_{kj}(\tilde{\bm{x}}_k,\tilde{\bm{x}}_k') - \tau_k \notag \\
 \qquad \qquad \qquad \qquad \qquad \qquad \qquad \quad + y_k(\bm{w}^\top \tilde{\bm{x}}_k + b) - 1+\xi_k \geqslant 0 \quad \forall \tilde{\bm{x}}_k \in \mathbb{R}^n \label{eq:redrcc2_b}\\
 \tilde{\bm{x}}_k^{\top}\bm{\Lambda}_k \tilde{\bm{x}}_k + \bm{q}_k^{\top} \tilde{\bm{x}}_k+ t_k + r_k \min \limits_{j=1,\cdots,m_k} \theta_{kj} \min \limits_{\tilde{\bm{x}}' \in Y_{kj}} \phi_{kj}(\tilde{\bm{x}}_k,\tilde{\bm{x}}_k') \geqslant 0 \quad \forall \tilde{\bm{x}}_k \in \mathbb{R}^n \label{eq:redrcc2_c}\\
 \bm{\Lambda}_k \in \mathbb{S}_+^n,  \bm{q}_k \in \mathbb{R}^n, t_k\in\mathbb{R}, r_k,\tau_k \in\mathbb{R}_+ .
\end{numcases}
\end{subequations}
}
Notice that (\ref{eq:redrcc2_b}) is equivalent to 
\begin{equation*}
\begin{split}
& -\tilde{\bm{x}}_k^{\top}\bm{\Lambda}_k \tilde{\bm{x}}_k - (\bm{q}_k+ y_k \bm{w})^{\top} \tilde{\bm{x}}_k- t_k + \tau_k  - y_k b + 1-\xi_k \\
& \qquad \qquad  \qquad \qquad \qquad \quad \leqslant r_k \theta_{kj} \min \limits_{\tilde{\bm{x}}_k' \in Y_{kj}} \phi_{kj}(\tilde{\bm{x}}_k,\tilde{\bm{x}}_k') \quad \forall \tilde{\bm{x}}_k \in \mathbb{R}^n, j=1,\cdots,m_k.
\end{split}
\end{equation*}
According to \cite[Theorem 1]{gro}, it holds if and only if there exists $\bm{v}_{kj} \in \mathbb{R}^n$ that satisfies
\begin{equation*}
\begin{aligned}
&- \inf \limits_{\tilde{\bm{x}}_k} \{\tilde{\bm{x}}_k^{\top}\bm{\Lambda}_k \tilde{\bm{x}}_k + (\bm{q}_k+ y_k\bm{w} +\bm{v}_{kj})^{\top} \tilde{\bm{x}}_k+ t_k - \tau_k +y_k b - 1+\xi_k \}\\
& \qquad \qquad + \delta^*(\bm{v}_{kj}\vert Y_{kj}) +(r_k \theta_{kj} \phi_{kj})^{**}(\bm{v}_{kj},-\bm{v}_{kj}) \leqslant 0 \quad j =1,\cdots,m_k,
\end{aligned}
\end{equation*}
which is furthermore equivalent to 
\begin{equation*}\label{eq:semi-inf-cons}
\begin{bmatrix} \bm{\Lambda}_k & \frac{1}{2}(\bm{v}_{kj}+\bm{q}_k+y_k \bm{w})\\[1em]
\frac{1}{2}(\bm{v}_{kj}+\bm{q}_k+y_k\bm{w})^{\top} & \begin{aligned} & t_k - \tau_k + y_k b - 1+\xi_k - \delta^*(\bm{v}_{kj}\vert Y_{kj}) \\ &  -(r_k \theta_{kj} \phi_{kj})^{**}(\bm{v}_{kj},-\bm{v}_{kj}) \end{aligned} \end{bmatrix} \succeq 0  \quad j =1,\cdots,m_k. \\
\end{equation*}
It is similar for (\ref{eq:redrcc2_c}). Therefore, the distributionally robust chance constraint (\ref{eq:gdrcc}) is equivalent to the following series of constraints.
\begin{equation*} 
\left\{
\begin{aligned}
&  t_k \!+ \!\bm{\Lambda}_k \cdot (\gamma_{k2} \bm{\Sigma}_k  \!+ \!\bm{\mu}_k \bm{\mu}_k^{\top}) \!+ \! \sqrt{\gamma_{k1}}||\bm{\Sigma}_k^{1/2} (\bm{q}_k + 2\bm{\Lambda}_k \bm{\mu}_k)||_2 \! + \! \bm{q}_k^{\top} \bm{\mu}_k  \!+ \! r_k \leqslant \varepsilon \tau_k \\
& \begin{bmatrix} \bm{\Lambda}_k & \frac{1}{2}(\bm{v}_{kj}+\bm{q}_k+ y_k \bm{w})\\[1em]
\frac{1}{2}(\bm{v}_{kj}+\bm{q}_k+y_k\bm{w})^{\top} & \begin{aligned} & t_k - \tau_k + y_k b - 1+\xi_k - \delta^*(\bm{v}_{kj}\vert Y_{kj}) \\ &  -(r_k \theta_{kj} \phi_{kj})^{**}(\bm{v}_{kj},-\bm{v}_{kj}) \end{aligned} \end{bmatrix} \succeq 0 \\[1em]
& \begin{bmatrix} \bm{\Lambda}_k & \frac{1}{2}(\bm{u}_{kj}+\bm{q}_k)\\[1em]
\frac{1}{2}(\bm{u}_{kj}+\bm{q}_k)^{\top} & t_k - \delta^*(\bm{u}_{kj}\vert Y_{kj}) -(r_k \theta_{kj} \phi_{kj})^{**}(\bm{u}_{kj},-\bm{u}_{kj})  \end{bmatrix} \succeq 0 \\
& \bm{v}_{kj},\bm{u}_{kj} \in \mathbb{R}^n,\quad j =1,\cdots,m_k,\\
& \bm{\Lambda}_k \in \mathbb{S}_+^n, \bm{q}_k \in \mathbb{R}^n,  t_k \in\mathbb{R}, r_k,\tau_k \in\mathbb{R}_+.  \\
\end{aligned}
\right.
\end{equation*}
It completes the proof.
\end{proof}

The computations involving core sets $Y_{kj}$ and distance functions $\phi_{kj}$ are separated in (\ref{eq:gdrcc_ref}). For some choices of core sets and distance functions discussed in \cite{gdro}, the conjugate functions are easily calculated. In the following proposition, we show the tractable reformulation for a commonly used class of functions.

\begin{proposition}
Given 
\begin{equation}\label{eq:setting}
\begin{aligned}
& Y_{kj} = \left\{\hat{\bm{x}}_{kj} + \bm{A}_{kj} \bm{\zeta} \middle\vert ||\bm{\zeta}||_{p_{kj}} \leqslant \sqrt{\bar{\gamma}_{kj}}\right\},\\
& \phi_{kj}(\tilde{\bm{x}}_k,\tilde{\bm{x}}_k')=||\tilde{\bm{x}}_k-\tilde{\bm{x}}_k' ||_{p_{kj}}. 
\end{aligned}
\end{equation} 

Then the GDRC SVM model (\ref{eq:gdrcc_svm}) is equivalently reformulated as
\begin{equation}\label{eq:svm_gdrcc_norm}
\begin{aligned}
\min \quad & \frac{1}{2} ||\bm{w}||_2^2 + C (\xi_1 + \xi_2) \\
s.t. \quad  &  t_k \!+ \!\bm{\Lambda}_k \cdot (\gamma_{k2} \bm{\Sigma}_k  \!+ \!\bm{\mu}_k \bm{\mu}_k^{\top}) \!+ \! \sqrt{\gamma_{k1}}||\bm{\Sigma}_k^{1/2} (\bm{q}_k \! + \! 2\bm{\Lambda}_k \bm{\mu}_k)||_2 \! + \! \bm{q}_k^{T} \bm{\mu}_k  \!+ \! r_k \leqslant \varepsilon \tau_k \\
& \begin{bmatrix} \bm{\Lambda}_k & \frac{1}{2}(\bm{v}_{kj}+\bm{q}_k+ y_k \bm{w})\\[1em]
\frac{1}{2}(\bm{v}_{kj}+\bm{q}_k+y_k\bm{w})^{\top} & \begin{aligned} & t_k - \tau_k + y_k b - 1+\xi_k \\ &  -( \hat{\bm{x}}_{kj} \bm{v}_{kj}+\sqrt{\bar{\gamma}_{kj}}||\bm{A}_{kj}^T \bm{v}_{kj}||_{q_{kj}}) \end{aligned} \end{bmatrix} \succeq 0 \\[1em]
& \begin{bmatrix} \bm{\Lambda}_k & \frac{1}{2}(\bm{u}_{kj}+\bm{q}_k)\\[1em]
\frac{1}{2}(\bm{u}_{kj}+\bm{q}_k)^{\top} & t_k - ( \hat{\bm{x}}_{kj} \bm{u}_{kj}+\sqrt{\bar{\gamma}_{kj}}||\bm{A}_{kj}^T \bm{u}_{kj}||_{q_{kj}}) \end{bmatrix} \succeq 0 \\
& ||\bm{v}_{kj}||_{q_{kj}} \leqslant r_k \theta_{kj}, \quad ||\bm{u}_{kj}||_{q_{kj}} \leqslant r_k \theta_{kj}, \\
& \bm{v}_{kj},\bm{u}_{kj} \in \mathbb{R}^n,\quad j =1,\cdots,m_k,\\
& \bm{\Lambda}_k \in \mathbb{S}_+^n, \bm{q}_k \in \mathbb{R}^n,  t_k \in\mathbb{R}, r_k,\tau_k \in\mathbb{R}_+, \quad k=1,2 \\
& \bm{w} \in \mathbb{R}^n, b\in \mathbb{R},  \xi_1 ,\xi_2 \in \mathbb{R}_+.
\end{aligned}
\end{equation}
where $1/q_{kj} + 1/p_{kj} = 1$. If all norms are $l_1$, $l_2$ or $l_{\infty}$ norms, the reformulation (\ref{eq:svm_gdrcc_norm}) is SDP representable.
\end{proposition}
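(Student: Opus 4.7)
The plan is to specialize the reformulation (\ref{eq:gdrcc_ref}) of Theorem~\ref{thm:ref} to the setting (\ref{eq:setting}) by evaluating, in closed form, the two objects $\delta^{*}(\cdot\,|\,Y_{kj})$ and $(r_k\theta_{kj}\phi_{kj})^{**}(\cdot,-\cdot)$ that appear inside its two LMIs, and then checking SDP representability of the resulting constraints when $p_{kj}\in\{1,2,\infty\}$.

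First, I would compute the support function of $Y_{kj}$. Since $Y_{kj}$ is the image of the $l_{p_{kj}}$-ball of radius $\sqrt{\bar{\gamma}_{kj}}$ under the affine map $\bm{\zeta}\mapsto\hat{\bm{x}}_{kj}+\bm{A}_{kj}\bm{\zeta}$, writing out the defining supremum and applying H\"older's inequality with dual exponent $1/p_{kj}+1/q_{kj}=1$ immediately yields
\begin{equation*}
\delta^{*}(\bm{v}\,|\,Y_{kj}) \;=\; \hat{\bm{x}}_{kj}^{\top}\bm{v} \;+\; \sqrt{\bar{\gamma}_{kj}}\,||\bm{A}_{kj}^{\top}\bm{v}||_{q_{kj}},
\end{equation*}
which substitutes verbatim into the bottom-right entries of both matrix blocks of (\ref{eq:gdrcc_ref}), for the two dual variables $\bm{v}_{kj}$ and $\bm{u}_{kj}$.

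Next I would evaluate the joint conjugate at the diagonal $(\bm{v},-\bm{v})$. Performing the change of variables $\bm{z}=\bm{x}-\bm{x}'$ inside the supremum that defines the conjugate of $r_k\theta_{kj}||\bm{x}-\bm{x}'||_{p_{kj}}$ decouples the two arguments: the $\bm{x}'$ direction is unconstrained, forcing the coefficient of $\bm{x}'$ to vanish (which is automatic at the diagonal point since $\bm{v}+(-\bm{v})=0$), while the $\bm{z}$ direction reduces to the conjugate of the scaled norm $r_k\theta_{kj}||\cdot||_{p_{kj}}$, which equals the indicator of the dual-norm ball $\{\bm{v}:||\bm{v}||_{q_{kj}}\leqslant r_k\theta_{kj}\}$. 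Hence $(r_k\theta_{kj}\phi_{kj})^{**}(\bm{v},-\bm{v})$ is $0$ when $||\bm{v}||_{q_{kj}}\leqslant r_k\theta_{kj}$ and $+\infty$ otherwise. Applying this to both $\bm{v}_{kj}$ and $\bm{u}_{kj}$ lets me drop those conjugate terms from the LMIs of (\ref{eq:gdrcc_ref}) and adjoin the explicit constraints $||\bm{v}_{kj}||_{q_{kj}}\leqslant r_k\theta_{kj}$, $||\bm{u}_{kj}||_{q_{kj}}\leqslant r_k\theta_{kj}$. Collecting everything over $k=1,2$ produces exactly (\ref{eq:svm_gdrcc_norm}).

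For the SDP-representability claim, I would argue that every nonlinear ingredient of (\ref{eq:svm_gdrcc_norm}) admits a standard SDP lift when $p_{kj}\in\{1,2,\infty\}$. The $l_2$-term $||\bm{\Sigma}_k^{1/2}(\bm{q}_k+2\bm{\Lambda}_k\bm{\mu}_k)||_2$ appearing in a scalar inequality is a second-order cone quantity and is LMI representable via a Schur complement. The dual-norm terms $||\bm{A}_{kj}^{\top}\bm{v}_{kj}||_{q_{kj}}$ and $||\bm{A}_{kj}^{\top}\bm{u}_{kj}||_{q_{kj}}$ sitting inside the LMIs can be replaced by auxiliary scalar majorants $s_{kj},s_{kj}'$ defined by $||\bm{A}_{kj}^{\top}\bm{v}_{kj}||_{q_{kj}}\leqslant s_{kj}$, and these majorant constraints are finite systems of linear inequalities for $q_{kj}\in\{1,\infty\}$ and an SOC constraint for $q_{kj}=2$; the external bounds on $||\bm{v}_{kj}||_{q_{kj}}$ and $||\bm{u}_{kj}||_{q_{kj}}$ are handled identically. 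The main obstacle is the conjugate computation in the second step: one must verify that the $\bm{z}=\bm{x}-\bm{x}'$ decoupling together with the specific evaluation at the diagonal $(\bm{v},-\bm{v})$ really collapses the joint conjugate into a clean indicator of a dual-norm ball, so that the core-set penalty emerges as an external norm constraint rather than as an unwieldy implicit term inside the LMI. Once that reduction is in place, the rest is direct substitution together with standard SOC-to-LMI lifts.
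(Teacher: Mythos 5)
Your proposal is correct and follows essentially the same route as the paper: the paper's proof simply states the two closed-form identities $\delta^*(\bm{v}_{kj}\vert Y_{kj})=\hat{\bm{x}}_{kj}^{\top}\bm{v}_{kj}+\sqrt{\bar{\gamma}_{kj}}\,||\bm{A}_{kj}^{\top}\bm{v}_{kj}||_{q_{kj}}$ and $(r_k\theta_{kj}\phi_{kj})^{**}(\bm{v}_{kj},-\bm{v}_{kj})=\delta(\bm{v}_{kj}\,\vert\,||\cdot||_{q_{kj}}\leqslant r_k\theta_{kj})$ as ``easy to verify'' and then substitutes them into Theorem~\ref{thm:ref}, which is exactly what you do, with the verifications (H\"older for the support function, the $\bm{z}=\bm{x}-\bm{x}'$ decoupling for the joint conjugate) spelled out explicitly. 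Your additional discussion of the SOC-to-LMI lifts for $p_{kj}\in\{1,2,\infty\}$ supplies detail the paper omits but does not change the argument.
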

\begin{proof}
It is easy to verify that
\begin{equation*}
\begin{aligned}
& \delta^*(\bm{v}_{kj}\vert Y_{kj})  = \hat{\bm{x}}_{kj} \bm{v}_{kj}+\sqrt{\bar{\gamma}_{kj}}||\bm{A}_{kj}^\top\bm{v}_{kj}||_{q_{kj}}, \\
&(r_k\theta_{kj} \phi_{kj})^{**}(\bm{v}_{kj},-\bm{v}_{kj}) = \left\{\begin{aligned}
      & 0 \quad \text{if} \quad ||\bm{v}_{kj}||_{q_{kj}} \leqslant r_k\theta_{kj}, \\
      & \infty \quad \text{otherwise},
      \end{aligned}
      \right.
\end{aligned}
\end{equation*}
where $1/p_{kj}+1/q_{kj}=1$. Then the result follows from Theorem \ref{thm:ref} and substituting them in GDRC SVM model (\ref{eq:gdrcc_svm}).
\end{proof}

\subsection{Approximation based on principal component analysis}\label{sec:app}
In the previous subsection, we show that the GDRC SVM model can be equivalently reformulated as an SDP model with appropriate choices of core sets $Y_{kj}$ and distance functions $\phi_{kj}$. However, it is well known that solving the large-scale SDP model requires high computational cost when using interior point methods. Therefore, some algorithms are developed to deal with large-scale SDPs\cite{sdpnal}, and some approximations are proposed for DRO models\cite{dro:pca}. In order to cope with the large-scale cases, we applied the approximation method based on principal component analysis (PCA) proposed by Cheng et al.\cite{dro:pca} to GDRC SVM. 

Consider the eigenvalue decomposition on $\bm{\Sigma}_k$, that is $\bm{\Sigma}_k = \bm{U}_k\bm{D}_k\bm{U}_k^\top$ where $\bm{U}_k$ is an orthogonal matrix and $\bm{D}_k$ is a diagonal matrix with diagonal elements $\lambda_1 \geqslant \dots \geqslant \lambda_n >0 $. Let $\bm{D}_k^{1/2} = \text{diag} \{\sqrt{\lambda_1},\dots,\sqrt{\lambda_n} \} $ and denote $\bm{S}_k = \bm{U}_k\bm{D}_k^{1/2}$. By variable substitution $\tilde{\bm{z}}_k  = \bm{S}_k^{-1}(\tilde{\bm{x}}_k-\bm{\mu}_k)$, it is easy to verify that

\begin{equation}\label{eq:drcc_evd}
\begin{aligned}
& \sup \limits_{F_k \in \mathcal{D}_k}  \mathbb{P}_{F_k}\left\{y_k(\bm{w}^\top \tilde{\bm{x}}_k + b) \leqslant 1-\xi_k\right\}  \\
= & \sup \limits_{F^{(n)}_k \in \mathcal{D}^{(n)}_{k}}  \mathbb{P}_{F^{(n)}_k}\left\{y_k \left(\bm{w}^\top(\bm{\mu}_k + \bm{S}_k\tilde{\bm{z}}_k) + b \right) \leqslant 1-\xi_k \right\},
\end{aligned}
\end{equation}

where 
\begin{equation}\label{eq:ambiguity_evd}
   \mathcal{D}^{(n)}_k =
   \left\{
   F^{(n)}_k \in \mathcal{P}(\mathbb{R}^n)
   \middle\vert
   \begin{array}{lcl}
    & E_{F^{(n)}_k}[\tilde{\bm{z}}_k]^{\top} E_{F^{(n)}_k}[\tilde{\bm{z}}_k] \leqslant \gamma_{k1}  \\
    & E_{F^{(n)}_k}[\tilde{\bm{z}}_k \tilde{\bm{z}}_k^{\top}] \preceq \gamma_{k2} \bm{I}_n \\
    & E_{F^{(n)}_k}[\min \limits_{j=1,\cdots,m_k} \theta_{kj} \min \limits_{\tilde{\bm{z}}_k' \in Z_{kj}} \psi_{kj}(\tilde{\bm{z}}_k,\tilde{\bm{z}}_k')] \leqslant 1 \\
    & \mathbb{P}_{F^{(n)}_k}(\tilde{\bm{z}}_k \in \mathbb{R}^n) = 1
   \end{array}
   \right\},
\end{equation}
\begin{equation}\label{eq:core_evd}
\begin{aligned}
& Z_{kj} = \left\{\tilde{\bm{z}}_k \in \mathbb{R}^n \middle\vert \bm{\mu}_k + \bm{S}_k\tilde{\bm{z}}_k \in Y_{kj}\right\}, \\
\text{and } & \psi_{kj}(\tilde{\bm{z}}_k, \tilde{\bm{z}}_k')= \phi_{kj}(\bm{\mu}_k + \bm{S}_k \tilde{\bm{z}}_k,\bm{\mu}_k + \bm{S}_k \tilde{\bm{z}}_k').
\end{aligned}
\end{equation}

To derive a low-rank approximation, $\tilde{\bm{x}}_k^{(r)}  = \bm{\mu}_k + \bm{S}_k^{(r)}\tilde{\bm{z}}_k^{(r)}$ is adopted to approximate $\tilde{\bm{x}}_k$, where $\bm{S}_k^{(r)} = \bm{U}_{k(n\times r)}\bm{D}_{k(r)}^{1/2}$, $\bm{U}_{k(n\times r)}$ is the $n \times r$ upper left submatrix of $\bm{U}_k$ and $\bm{D}_{k(r)}$ is the $r \times r$ upper left submatrix of $\bm{D}_k$, and $\tilde{\bm{z}}_k^{(r)}$ is the vector of the first $r$ elements of $\tilde{\bm{z}}_k$. Hence, the distributionally robust chance constraint (\ref{eq:gdrcc}) is approximated by
\begin{equation}\label{eq:drcc_app}
\sup \limits_{F^{(r)}_k \in \mathcal{D}^{(r)}_{k}}  \mathbb{P}_{F^{(r)}_k} \left\{y_k \left(\bm{w}^\top (\bm{\mu}_k + \bm{S}_k^{(r)}\tilde{\bm{z}}_k^{(r)}) + b \right) \leqslant 1-\xi_k \right\} \leqslant \varepsilon,
\end{equation}
where 
\begin{equation}\label{eq:ambiguity_evd_app}
   \mathcal{D}^{(r)}_k =
   \left\{
   F^{(r)}_k \in \mathcal{P}(\mathbb{R}^r)
   \middle\vert
   \begin{array}{lcl}
    & E_{F^{(r)}_k}[\tilde{\bm{z}}_k^{(r)}]^{\top} E_{F^{(r)}_k}[\tilde{\bm{z}}_k^{(r)}] \leqslant \gamma_{k1}  \\
    & E_{F^{(r)}_k}[\tilde{\bm{z}}_k^{(r)} (\tilde{\bm{z}}_k^{(r)})^{\top}] \preceq \gamma_{k2} \bm{I}_r \\
    & E_{F^{(r)}_k}[\min \limits_{j=1,\cdots,m_k} \theta_{kj} \min \limits_{\tilde{\bm{z}}_k'^{(r)} \in Z_{kj}^{(r)}} \psi_{kj}(\tilde{\bm{z}}_k^{(r)},\tilde{\bm{z}}_k'^{(r)})] \leqslant 1 \\
    & \mathbb{P}_{F^{(r)}_k}(\tilde{\bm{z}}_k^{(r)} \in \mathbb{R}^r) = 1
   \end{array}
   \right\},
\end{equation}
\begin{equation}\label{eq:core_app}
\begin{aligned}
& Z_{kj}^{(r)} = \left\{\tilde{\bm{z}}_k^{(r)} \in \mathbb{R}^r \middle \vert \bm{\mu}_k + \bm{S}_k^{(r)}\tilde{\bm{z}}_k^{(r)} \in Y_{kj}\right\}, \\
\text{and } & \psi_{kj}(\tilde{\bm{z}}_k^{(r)}, \tilde{\bm{z}}_k'^{(r)})= \phi_{kj}(\bm{\mu}_k + \bm{S}_k^{(r)}\tilde{\bm{z}}_k^{(r)},\bm{\mu}_k + \bm{S}_k^{(r)}\tilde{\bm{z}}_k'^{(r)}).
\end{aligned}
\end{equation}

It is obvious that when $r=n$, (\ref{eq:drcc_app}), (\ref{eq:ambiguity_evd_app}) and (\ref{eq:core_app}) coincide with (\ref{eq:drcc_evd}), (\ref{eq:ambiguity_evd}) and (\ref{eq:core_evd}), respectively. The approximation maintains the same structure of the original model when reformulating it, as shown by the following two propositions.

\begin{proposition}
Assume that $\bm{\mu}_{k} \in Y_{kj}$, 
the the distributionally robust chance constraint (\ref{eq:drcc_app}) with the ambiguity set (\ref{eq:ambiguity_evd_app}) can be reformulated as:
\begin{equation} \label{eq:gdrcc_app_ref}
\left\{
\begin{aligned}
&  t_k + \gamma_{k2}tr(\bm{\Lambda}_k) + \sqrt{\gamma_{k1}}||\bm{q}_k ||_2 +  r_k \leqslant \varepsilon \tau_k \\
& \begin{bmatrix} \bm{\Lambda}_k & \frac{1}{2}\left(\bm{v}_{kj}+\bm{q}_k+ y_k  \bm{S}_k^{(r)\top} \bm{w}\right)\\[1em]
\frac{1}{2}\left(\bm{v}_{kj}+\bm{q}_k+y_k \bm{S}_k^{(r)\top} \bm{w}\right)^{\top} & \begin{aligned} & t_k - \tau_k + y_k(\bm{w}^{\top} \bm{\mu}_k + b) - 1+\xi_k  \\ & - \delta^*(\bm{v}_{kj}\vert Z^{(r)}_{kj}) -(r_k \theta_{kj} \psi_{kj})^{**}(\bm{v}_{kj},-\bm{v}_{kj}) \end{aligned} \end{bmatrix} \succeq 0 \\[1em]
& \begin{bmatrix} \bm{\Lambda}_k & \frac{1}{2}(\bm{u}_{kj}+\bm{q}_k)\\[1em]
\frac{1}{2}(\bm{u}_{kj}+\bm{q}_k)^{\top} & t_k - \delta^*(\bm{u}_{kj}\vert Z^{(r)}_{kj}) -(r_k \theta_{kj} \psi_{kj})^{**}(\bm{u}_{kj},-\bm{u}_{kj})  \end{bmatrix} \succeq 0 \\
& \bm{v}_{kj},\bm{u}_{kj} \in \mathbb{R}^r, \quad j =1,\cdots,m_k ,\\
& \bm{\Lambda}_k \in \mathbb{S}_+^r,  \bm{q}_k \in \mathbb{R}^r,  t_k \in\mathbb{R},  r_k,\tau_k \in\mathbb{R}_+.
\end{aligned}
\right.
\end{equation}
\end{proposition}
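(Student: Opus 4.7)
The plan is to derive (\ref{eq:gdrcc_app_ref}) by specializing Theorem \ref{thm:ref} to the lower-dimensional problem defined by (\ref{eq:drcc_app})--(\ref{eq:ambiguity_evd_app}). The ambiguity set $\mathcal{D}^{(r)}_k$ has exactly the structure of (\ref{eq:ambiguity}) under the identifications
\[
\bm{\mu}_k \;\rightsquigarrow\; \bm{0}_r,\quad \bm{\Sigma}_k \;\rightsquigarrow\; \bm{I}_r,\quad \Xi_k \;\rightsquigarrow\; \mathbb{R}^r,\quad Y_{kj} \;\rightsquigarrow\; Z^{(r)}_{kj},\quad \phi_{kj} \;\rightsquigarrow\; \psi_{kj},
\]
with the random vector $\tilde{\bm{x}}_k$ replaced by $\tilde{\bm{z}}_k^{(r)}$. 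To bring the chance event into the same linear form, I first expand
\[
y_k\bigl(\bm{w}^{\top}(\bm{\mu}_k + \bm{S}_k^{(r)}\tilde{\bm{z}}_k^{(r)}) + b\bigr) = y_k (\bm{S}_k^{(r)\top}\bm{w})^{\top}\tilde{\bm{z}}_k^{(r)} + y_k(\bm{w}^{\top}\bm{\mu}_k + b),
\]
so the role of $\bm{w}$ in Theorem \ref{thm:ref} is now played by $\bm{S}_k^{(r)\top}\bm{w}$, and the role of $b$ is played by $\bm{w}^{\top}\bm{\mu}_k + b$.

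Next I would verify that the hypothesis of Theorem \ref{thm:ref} transfers correctly: after the substitutions it reads $\bm{0}_r \in Z^{(r)}_{kj}$, which by the definition of $Z^{(r)}_{kj}$ in (\ref{eq:core_app}) is equivalent to $\bm{\mu}_k + \bm{S}_k^{(r)}\bm{0}_r = \bm{\mu}_k \in Y_{kj}$, i.e., the standing assumption of the proposition. Once this is confirmed, each step in the proof of Theorem \ref{thm:ref}---the strong duality appeal to \cite[Proposition 3.4]{Shapiro}, the Farkas-type separation, the rescaling by $\tau_k$ (with the same $0<\varepsilon<1$ contradiction, which only uses that the center lies in every core set), and the conjugate-based reformulation via \cite[Theorem 1]{gro}---applies verbatim, since it depends only on the abstract convex-analytic structure of the constraints.

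The last step is to propagate the substitutions through (\ref{eq:gdrcc_ref}) and simplify. The first scalar inequality collapses because $\bm{\mu}_k\bm{\mu}_k^{\top}$ and $\bm{q}_k^{\top}\bm{\mu}_k$ vanish, $\bm{\Sigma}_k^{1/2}$ becomes $\bm{I}_r$, and $\bm{\Lambda}_k\cdot(\gamma_{k2}\bm{I}_r)=\gamma_{k2}\,\mathrm{tr}(\bm{\Lambda}_k)$, producing the first line of (\ref{eq:gdrcc_app_ref}). The two SDP blocks pick up the coupling $y_k\bm{S}_k^{(r)\top}\bm{w}$, the offset $y_k(\bm{w}^{\top}\bm{\mu}_k+b)$, and the support/biconjugate expressions associated with $Z^{(r)}_{kj}$ and $\psi_{kj}$, reproducing the remaining constraints. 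The main---and genuinely minor---obstacle is to confirm that every convex-analytic step in the proof of Theorem \ref{thm:ref} is insensitive to replacing $\bm{\Sigma}_k$ by the identity and $\bm{\mu}_k$ by the origin; since none of those steps invokes more than positive definiteness of the covariance and membership of the center in each core set, the reduction goes through without modification.
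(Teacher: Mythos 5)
Your proposal is correct and takes the same route as the paper, which simply states that the result follows directly from Theorem \ref{thm:ref}; you have merely made explicit the identifications ($\bm{\mu}_k \rightsquigarrow \bm{0}_r$, $\bm{\Sigma}_k \rightsquigarrow \bm{I}_r$, $Y_{kj} \rightsquigarrow Z^{(r)}_{kj}$, $\phi_{kj} \rightsquigarrow \psi_{kj}$, $\bm{w} \rightsquigarrow \bm{S}_k^{(r)\top}\bm{w}$, $b \rightsquigarrow \bm{w}^{\top}\bm{\mu}_k + b$) and the transfer of the hypothesis to $\bm{0}_r \in Z^{(r)}_{kj}$, all of which the paper leaves implicit.
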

\begin{proof}
It directly follows from Theorem \ref{thm:ref}.
\end{proof}

\begin{proposition}
Given 
\begin{equation}
\begin{split}
& Y_{kj} = \left\{\hat{\bm{x}}_{kj} + \bm{A}_{kj} \bm{\zeta} \middle\vert ||\bm{\zeta}||_{p_{kj}} \leqslant \sqrt{\bar{\gamma}_{kj}}\right\},\\
& \phi_{kj}(\tilde{\bm{x}},\tilde{\bm{x}}')=||\tilde{\bm{x}}-\tilde{\bm{x}}' ||_{p_{kj}}. 
\end{split}
\end{equation} 
Then (\ref{eq:svm_gdrcc_norm}) is approximated by
\begin{equation}\label{eq:svm_gdrcc_norm_app}
\begin{aligned}
\min \quad & \frac{1}{2} ||\bm{w}||_2^2 + C (\xi_1 + \xi_2) \\
s.t. \quad  &  t_k + \gamma_{k2}tr(\bm{\Lambda}_k) + \sqrt{\gamma_{k1}}||\bm{q}_k ||_2 +  r_k \leqslant \varepsilon \tau_k \\
& \begin{bmatrix} \bm{\Lambda}_k & \frac{1}{2}\left(\bm{q}_k+ \bm{S}_k^{(r)\top} (y_k \bm{w}+\bm{v}_{kj})\right)\\[1em]
\frac{1}{2}\left(\bm{q}_k+ \bm{S}_k^{(r)\top} (y_k \bm{w}+\bm{v}_{kj})\right)^{\top} & \begin{aligned} & t_k - \tau_k + y_k(\bm{w}^{\top} \bm{\mu}_k + b) - 1+\xi_k  \\ & - (\hat{\bm{x}}_{kj} - \bm{\mu}_k)^\top\bm{v}_{kj} -\sqrt{\bar{\gamma}_{kj}}||\bm{A}_{kj}^\top\bm{v}_{kj}||_{q_{kj}} \end{aligned} \end{bmatrix} \succeq 0 \\[1em]
& \begin{bmatrix} \bm{\Lambda}_k & \frac{1}{2}\left(\bm{q}_k+\bm{S}_k^{(r)\top}\bm{u}_{kj} \right)\\[1em]
\frac{1}{2}\left(\bm{q}_k+\bm{S}_k^{(r)\top}\bm{u}_{kj}\right)^{\top} & t_k - (\hat{\bm{x}}_{kj} - \bm{\mu}_k)^\top \bm{u}_{kj} -\sqrt{\bar{\gamma}_{kj}}||\bm{A}_{kj}^\top\bm{u}_{kj}||_{q_{kj}}  \end{bmatrix} \succeq 0 \\
& ||\bm{v}_{kj}||_{q_{kj}} \leqslant r_k \theta_{kj}, \quad ||\bm{u}_{kj}||_{q_{kj}} \leqslant r_k \theta_{kj} \\
& \bm{v}_{kj},\bm{u}_{kj} \in \mathbb{R}^n ,\quad j =1,\cdots,m_k ,\\
& \bm{\Lambda}_k \in \mathbb{S}_+^r,  \bm{q}_k \in \mathbb{R}^r,  t_k \in\mathbb{R}, r_k,\tau_k \in\mathbb{R}_+, \quad k=1,2 \\
& \bm{w} \in \mathbb{R}^n,  b\in \mathbb{R},  \xi_1 ,\xi_2 \in \mathbb{R}_+.
\end{aligned}
\end{equation}
where $1/q_{kj} + 1/p_{kj} = 1$. 
Especially, the optimal values of (\ref{eq:svm_gdrcc_norm_app}) and (\ref{eq:svm_gdrcc_norm}) are equal when $r=n$.
\end{proposition}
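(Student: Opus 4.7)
The plan is to invoke the previous proposition (the SDP reformulation (\ref{eq:gdrcc_app_ref}) of the approximated chance constraint) for each $k\in\{1,2\}$, and then to specialize the two abstract quantities $\delta^*(\cdot|Z_{kj}^{(r)})$ and $(r_k\theta_{kj}\psi_{kj})^{**}(\cdot,-\cdot)$ to the norm-ball setting (\ref{eq:setting}). By (\ref{eq:core_app}) these read $Z_{kj}^{(r)}=\{\tilde{\bm{z}}\in\mathbb{R}^r:\bm{S}_k^{(r)}\tilde{\bm{z}}=(\hat{\bm{x}}_{kj}-\bm{\mu}_k)+\bm{A}_{kj}\bm{\zeta},\ \|\bm{\zeta}\|_{p_{kj}}\le\sqrt{\bar{\gamma}_{kj}}\}$ and $\psi_{kj}(\tilde{\bm{z}},\tilde{\bm{z}}')=\|\bm{S}_k^{(r)}(\tilde{\bm{z}}-\tilde{\bm{z}}')\|_{p_{kj}}$, so both involve the rectangular map $\bm{S}_k^{(r)}$ and are not literally the norm-ball-in-$\mathbb{R}^r$ data that would allow a direct reuse of the earlier norm-ball calculation.

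For the joint conjugate, eliminating $\tilde{\bm{z}}'$ by the substitution $\bm{u}=\tilde{\bm{z}}-\tilde{\bm{z}}'$ gives $(r_k\theta_{kj}\psi_{kj})^{**}(\bm{v}^{(r)},-\bm{v}^{(r)})=\sup_{\bm{u}\in\mathbb{R}^r}\{\bm{v}^{(r)\top}\bm{u}-r_k\theta_{kj}\|\bm{S}_k^{(r)}\bm{u}\|_{p_{kj}}\}$, which equals $0$ precisely when $|\bm{v}^{(r)\top}\bm{u}|\le r_k\theta_{kj}\|\bm{S}_k^{(r)}\bm{u}\|_{p_{kj}}$ for every $\bm{u}$, and $+\infty$ otherwise. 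A Hahn--Banach extension of the functional $\bm{S}_k^{(r)}\bm{u}\mapsto\bm{v}^{(r)\top}\bm{u}$ from $\mathrm{range}(\bm{S}_k^{(r)})\subset\mathbb{R}^n$ to all of $\mathbb{R}^n$ shows this domination is equivalent to the existence of some $\bm{v}_{kj}\in\mathbb{R}^n$ with $\bm{S}_k^{(r)\top}\bm{v}_{kj}=\bm{v}^{(r)}$ and $\|\bm{v}_{kj}\|_{q_{kj}}\le r_k\theta_{kj}$. Performing the substitution $\bm{v}^{(r)}\leftarrow\bm{S}_k^{(r)\top}\bm{v}_{kj}$ in (\ref{eq:gdrcc_app_ref}) turns the off-diagonal block $\frac{1}{2}(\bm{v}^{(r)}+\bm{q}_k+y_k\bm{S}_k^{(r)\top}\bm{w})$ into $\frac{1}{2}(\bm{q}_k+\bm{S}_k^{(r)\top}(y_k\bm{w}+\bm{v}_{kj}))$, and replaces the $\psi^{**}$ term by the explicit inequality $\|\bm{v}_{kj}\|_{q_{kj}}\le r_k\theta_{kj}$ appearing in (\ref{eq:svm_gdrcc_norm_app}).

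For the support function, Lagrangian duality on the primal description of $Z_{kj}^{(r)}$ (Slater holds since $\bm{\mu}_k\in Y_{kj}$ makes $\tilde{\bm{z}}=0$ a strictly feasible point) yields
\[
\delta^*(\bm{v}^{(r)}|Z_{kj}^{(r)})=\inf_{\bm{\lambda}\in\mathbb{R}^n:\,\bm{S}_k^{(r)\top}\bm{\lambda}=\bm{v}^{(r)}}\left\{\bm{\lambda}^\top(\hat{\bm{x}}_{kj}-\bm{\mu}_k)+\sqrt{\bar{\gamma}_{kj}}\|\bm{A}_{kj}^\top\bm{\lambda}\|_{q_{kj}}\right\}.
\]
Evaluating at the (generally suboptimal) choice $\bm{\lambda}=\bm{v}_{kj}$ produced in the previous step furnishes the upper bound $(\hat{\bm{x}}_{kj}-\bm{\mu}_k)^\top\bm{v}_{kj}+\sqrt{\bar{\gamma}_{kj}}\|\bm{A}_{kj}^\top\bm{v}_{kj}\|_{q_{kj}}$, which is precisely the negated quantity in the lower-right entries of (\ref{eq:svm_gdrcc_norm_app}). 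Because $\delta^*$ is subtracted inside the Schur complement, replacing it by a larger upper bound tightens the SDP, so (\ref{eq:svm_gdrcc_norm_app}) is a conservative (valid) approximation of (\ref{eq:svm_gdrcc_norm}).

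Finally, when $r=n$ the matrix $\bm{S}_k=\bm{U}_k\bm{D}_k^{1/2}$ is square and invertible (all eigenvalues are positive), so the equation $\bm{S}_k^\top\bm{\lambda}=\bm{v}^{(r)}$ admits the unique solution $\bm{\lambda}=\bm{v}_{kj}$; the infimum defining $\delta^*$ is thus attained exactly at $\bm{v}_{kj}$, the upper bound becomes tight, and the Hahn--Banach lifting reduces to a bijection. Combined with the fact that the PCA truncation in Section \ref{sec:app} collapses to the identity at $r=n$, (\ref{eq:svm_gdrcc_norm_app}) and (\ref{eq:svm_gdrcc_norm}) share the same feasible set, and hence the same optimal value. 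The main obstacle in executing this plan is the rank-deficient case $r<n$: because $\bm{S}_k^{(r)\top}$ then has a nontrivial kernel, using a single $\bm{v}_{kj}$ both for the Hahn--Banach lifting in the $\psi^{**}$ term and as the feasible multiplier in the $\delta^*$ infimum is genuinely conservative, and the write-up must isolate where this slack enters and explain why it vanishes exactly at $r=n$.
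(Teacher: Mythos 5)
Your proposal is correct and follows essentially the same route as the paper: invoke the SDP reformulation of the approximated chance constraint, compute $\delta^*(\cdot\vert Z_{kj}^{(r)})$ and $(r_k\theta_{kj}\psi_{kj})^{**}(\cdot,-\cdot)$ by lifting through $\bm{S}_k^{(r)\top}$ to variables in $\mathbb{R}^n$, observe that forcing a single lift $\bm{v}_{kj}$ to serve both terms introduces the (conservative) slack for $r<n$, and recover exactness at $r=n$ from the invertibility of $\bm{S}_k$. The only cosmetic difference is that you justify the lifted representation of the $\psi^{**}$ term via a Hahn--Banach extension, whereas the paper obtains the same identity (and the $\delta^*$ infimum) by Lagrangian duality; both yield the identical constraint $\|\bm{v}_{kj}\|_{q_{kj}}\leqslant r_k\theta_{kj}$ with $\bm{S}_k^{(r)\top}\bm{v}_{kj}$ in the off-diagonal block.
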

\begin{proof}
For $\bm{\omega}_{kj}\in \mathbb{R}^r$,
\begin{equation*}\label{eq:delta1}
\begin{aligned}
\delta^*(\bm{\omega}_{kj}\vert Z_{kj}^{(r)}) 
& = \sup \left\{\bm{\omega}_{kj}^\top \tilde{\bm{z}}_k^{(r)} \middle\vert \tilde{\bm{z}}_k^{(r)} \in Z_{kj}^{(r)}\right\}\\
& = \sup \left\{\bm{\omega}_{kj}^\top \tilde{\bm{z}}_k^{(r)}\middle\vert \bm{\mu}_k + \bm{S}_k^{(r)}\tilde{\bm{z}}_k^{(r)} \in Y_{kj} \right\}\\
& = \sup \left\{\bm{\omega}_{kj}^\top \tilde{\bm{z}}_k^{(r)} \middle\vert \bm{\mu}_k + \bm{S}_k^{(r)}\tilde{\bm{z}}_k^{(r)} = \hat{\bm{x}}_{kj} + \bm{A}_{kj} \bm{\zeta}, ||\bm{\zeta}||_{p_{kj}} \leqslant \sqrt{\bar{\gamma}_{kj}} \right\} \\
& = \inf \left\{(\hat{\bm{x}}_{kj} - \bm{\mu}_k)^\top \bm{v}_{kj} +  \sqrt{\bar{\gamma}_{kj}}||\bm{A}_{kj}^T\bm{v}_{kj} ||_{q_{kj}} \middle\vert \bm{\omega}_{kj} = \bm{S}_k^{(r)\top} \bm{v}_{kj} \right\}, \\
\end{aligned}
\end{equation*}
where the last equality follows from Lagrangian duality.

\begin{equation*}
\begin{aligned}
&(r_k \theta_{kj} \psi_{kj})^{**}(\bm{\omega}_{kj},-\bm{\omega}_{kj})\\
 = &\sup \left\{ \bm{\omega}_{kj}^\top(\tilde{\bm{z}}_k^{(r)} - \tilde{\bm{z}}_k'^{(r)}) - r_k \theta_{kj}\psi_{kj}(\tilde{\bm{z}}_k^{(r)},\tilde{\bm{z}}_k'^{(r)}) \right\} \\
 = &\sup \left\{ \bm{\omega}_{kj}^\top(\tilde{\bm{z}}_k^{(r)} - \tilde{\bm{z}}_k'^{(r)}) - r_k \theta_{kj}\phi_{kj}(\tilde{\bm{x}}_k^{(r)},\tilde{\bm{x}}_k'^{(r)}) \middle \vert \begin{array}{c} \tilde{\bm{x}}_k^{(r)}= \bm{\mu}_k + \bm{S}_k^{(r)}\tilde{\bm{z}}_k^{(r)}\\ \tilde{\bm{x}}_k'^{(r)}= \bm{\mu}_k + \bm{S}_k^{(r)}\tilde{\bm{z}}_k'^{(r)} \end{array} \right\} \\
 = &\inf \left\{ (r_k \theta_{kj}\phi_{kj})^{**}(\bm{\lambda}_{kj,1},\bm{\lambda}_{kj,2}) - \bm{\mu}_k^\top (\bm{\lambda}_{kj,1}+\bm{\lambda}_{kj,2}) \middle \vert \begin{array}{c} \bm{\omega}_{kj} - \bm{S}_k^{(r)\top} \bm{\lambda}_{kj,1} = 0\\ \bm{\omega}_{kj} + \bm{S}_k^{(r)\top} \bm{\lambda}_{kj,2} = 0 \end{array} \right\} \\
= & \inf \left\{ (r_k \theta_{kj}\phi_{kj})^{**}(\bm{\lambda}_{kj},-\bm{\lambda}_{kj}) \middle \vert \bm{\omega}_{kj} = \bm{S}_k^{(r)\top} \bm{\lambda}_{kj} \right\} \\
= & \left\{\begin{aligned}
      & 0 \quad \text{if} \quad \bm{\omega}_{kj} = \bm{S}_k^{(r)\top} \bm{\lambda}_{kj},||\bm{\lambda}_{kj}||_{q_{kj}} \leqslant r_k\theta_{kj}, \\
      & \infty \quad \text{otherwise}.
      \end{aligned}
      \right.
\end{aligned}
\end{equation*}
where the third equality follows from Lagrangian duality; the forth equality follows from the fact that $(r_k \theta_{kj}\phi_{kj})^{**}(\bm{\lambda}_{kj,1},\bm{\lambda}_{kj,2}) = \infty$ if $\bm{\lambda}_{kj,1}\neq -\bm{\lambda}_{kj,2}$. 

Substitute them in (\ref{eq:gdrcc_app_ref}), it gives that
\begin{equation} \label{eq:gdrcc_app_ref_norm}
\left\{
\begin{aligned}
&  t_k + \gamma_{k2}tr(\bm{\Lambda}_k) + \sqrt{\gamma_{k1}}||\bm{q}_k ||_2 +  r_k \leqslant \varepsilon \tau_k \\
& \begin{bmatrix} \bm{\Lambda}_k & \frac{1}{2}\left(\bm{\omega}_{kj}+\bm{q}_k+ y_k  \bm{S}_k^{(r)\top} \bm{w}\right)\\[1em]
\frac{1}{2}\left(\bm{\omega}_{kj}+\bm{q}_k+y_k \bm{S}_k^{(r)\top} \bm{w} \right)^{\top} & \begin{aligned} & t_k - \tau_k + y_k(\bm{w}^{\top} \bm{\mu}_k + b) - 1+\xi_k  \\ & - (\hat{\bm{x}}_{kj} - \bm{\mu}_k)^\top \bm{v}_{kj} - \sqrt{\bar{\gamma}_{kj}}||\bm{A}_{kj}^\top\bm{v}_{kj} ||_{q_{kj}} \end{aligned} \end{bmatrix} \succeq 0 \\[1em]
& \begin{bmatrix} \bm{\Lambda}_k & \frac{1}{2}(\bm{\beta}_{kj}+\bm{q}_k)\\[1em]
\frac{1}{2}(\bm{\beta}_{kj}+\bm{q}_k)^{\top} & t_k - (\hat{\bm{x}}_{kj} - \bm{\mu}_k)^\top \bm{u}_{kj} -  \sqrt{\bar{\gamma}_{kj}}||\bm{A}_{kj}^\top \bm{u}_{kj} ||_{q_{kj}}  \end{bmatrix} \succeq 0 \\
& \bm{\omega}_{kj} = \bm{S}_k^{(r)\top} \bm{v}_{kj},\quad \bm{\beta}_{kj} = \bm{S}_k^{(r)\top} \bm{u}_{kj} \\
& \bm{\omega}_{kj} = \bm{S}_k^{(r)\top} \bm{\lambda}_{kj},\quad \bm{\beta}_{kj} = \bm{S}_k^{(r)\top} \bm{\eta}_{kj}\\
& ||\bm{\lambda}_{kj}||_{q_{kj}} \leqslant r_k\theta_{kj}, \quad ||\bm{\eta}_{kj}||_{q_{kj}} \leqslant r_k\theta_{kj} \\
& \bm{\omega}_{kj},\bm{\beta}_{kj} \in \mathbb{R}^r,\bm{v}_{kj},\bm{u}_{kj},\bm{\lambda}_{kj},\bm{\eta}_{kj} \in \mathbb{R}^n, \quad j =1,\cdots,m_k ,\\
& \bm{\Lambda}_k \in \mathbb{S}_+^r,  \bm{q}_k \in \mathbb{R}^r,  t_k \in\mathbb{R},  r_k,\tau_k \in\mathbb{R}_+.
\end{aligned}
\right.
\end{equation}

When $r=n$, $\bm{S}_k^{(r)}= \bm{S}_k$ is invertible, so that $\bm{\lambda}_{kj} = \bm{v}_{kj}$, $\bm{\eta}_{kj}=\bm{u}_{kj}$. It inspires a tighter approximation when $r<n$. Let $\bm{\lambda}_{kj} = \bm{v}_{kj}$, $\bm{\eta}_{kj}=\bm{u}_{kj}$ and remove redundant equality constraints in (\ref{eq:gdrcc_app_ref_norm}), then it gives that:
\begin{equation}\label{gdrcc_app_ref_norm_t}
\left\{
\begin{aligned}
&  t_k + \gamma_{k2}tr(\bm{\Lambda}_k) + \sqrt{\gamma_{k1}}||\bm{q}_k ||_2 +  r_k \leqslant \varepsilon \tau_k \\
& \begin{bmatrix} \bm{\Lambda}_k & \frac{1}{2}\left(\bm{q}_k+ \bm{S}_k^{(r)\top} (y_k \bm{w}+\bm{v}_{kj})\right)\\[1em]
\frac{1}{2}\left(\bm{q}_k+ \bm{S}_k^{(r)\top} (y_k \bm{w}+\bm{v}_{kj})\right)^{\top} & \begin{aligned} & t_k - \tau_k + y_k(\bm{w}^{\top} \bm{\mu}_k + b) - 1+\xi_k  \\ & - (\hat{\bm{x}}_{kj} - \bm{\mu}_k)^\top\bm{v}_{kj} -\sqrt{\bar{\gamma}_{kj}}||\bm{A}_{kj}^\top \bm{v}_{kj}||_{q_{kj}} \end{aligned} \end{bmatrix} \succeq 0 \\[1em]
& \begin{bmatrix} \bm{\Lambda}_k & \frac{1}{2}\left(\bm{q}_k+\bm{S}_k^{(r)\top}\bm{u}_{kj}\right)\\[1em]
\frac{1}{2}\left(\bm{q}_k+\bm{S}_k^{(r)\top}\bm{u}_{kj}\right)^{\top} & t_k - (\hat{\bm{x}}_{kj} - \bm{\mu}_k)^\top \bm{u}_{kj} -\sqrt{\bar{\gamma}_{kj}}||\bm{A}_{kj}^\top \bm{u}_{kj}||_{q_{kj}}  \end{bmatrix} \succeq 0 \\
& ||\bm{v}_{kj}||_{q_{kj}} \leqslant r_k \theta_{kj}, \quad ||\bm{u}_{kj}||_{q_{kj}} \leqslant r_k \theta_{kj} \\
& \bm{v}_{kj},\bm{u}_{kj} \in \mathbb{R}^n ,\quad j =1,\cdots,m_k ,\\
& \bm{\Lambda}_k \in \mathbb{S}_+^r, \bm{q}_k \in \mathbb{R}^r,  t_k \in\mathbb{R}, r_k,\tau_k \in\mathbb{R}_+, \quad k=1,2. \\
\end{aligned}
\right.
\end{equation}
Addtitionally, it is easy to verify that (\ref{gdrcc_app_ref_norm_t}) is equivalent to 
\begin{equation}
\sup \limits_{F \in \mathcal{D}^{(n)}_{k}}  \mathbb{P}_{F^{(n)}_{k}} \left\{y_k \left(\bm{w}^\top (\bm{\mu}_k + \bm{S}_k\tilde{\bm{z}}_k) + b \right) \leqslant 1-\xi_k \right\} \leqslant \varepsilon,
\end{equation}
when $r=n$, and it is a relaxed approximation when $r<n$. Hence, the result follows from substituting it in the GDRC SVM model (\ref{eq:gdrcc_svm}).
\end{proof}

\subsection{Aproximation quality}
For convenience of writing, denote the approximation model (\ref{eq:svm_gdrcc_norm_app}) as $M_r$ with the optimal value $v^*(r)$. Similarly to that in \cite{dro:pca}, we can derive an upper bound for the gap of optimal values $v^*(n)-v^*(r)$.

\begin{proposition}\label{prop:app_quality}
Suppose that $(\bm{w}^{*},b^*,\bm{\xi}^{*},\bm{\Lambda}_k^*,\bm{q}_k^*,\bm{v}_{kj}^*,\bm{u}_{kj}^*,t_k^*,r_k^*,\tau_k^*)$ is a set of optimal solutions of the model $M_r$, then 
\begin{equation}\label{eq:gap}
\begin{aligned}
v^*(n)-v^*(r) \leqslant  \frac{C}{2} \sum \limits_{k,j}  (||\bm{g}_{kj}||_2 + ||\bm{h}_{kj}||_2),
\end{aligned}
\end{equation}
where $\bm{g}_{kj} = \bm{S}_k^{(n-r)\top}(y_k \bm{w}^* + \bm{v}_{kj}^*)$, $\bm{h}_{kj} = \bm{S}_k^{(n-r)\top} \bm{u}_{kj}^*$, $\bm{S}_k^{(n-r)} = \bm{U}_{k(n\times (n-r))} \bm{D}_{k(n-r)}^{1/2}$, $\bm{U}_{k(n\times (n-r))}$ is the $n \times (n-r)$ lower right submatrix of $\bm{U}_k$ and $\bm{D}_{k(n-r)}$ is the $(n-r) \times (n-r)$ lower right submatrix of $\bm{D}_k$.

\end{proposition}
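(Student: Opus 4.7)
The plan is to exhibit, for any $M_r$-optimal solution, a feasible point of $M_n$ whose objective exceeds $v^*(r)$ by no more than the bound in (\ref{eq:gap}). Because the right-hand side only inflates the slack variables, I keep $\tilde{\bm w}=\bm w^*$, $\tilde b=b^*$, $\tilde{\bm v}_{kj}=\bm v_{kj}^*$, $\tilde{\bm u}_{kj}=\bm u_{kj}^*$, $\tilde r_k=r_k^*$, and aim to set $\tilde\xi_k=\xi_k^*+\frac{1}{2}\sum_{j}(||\bm g_{kj}||_2+||\bm h_{kj}||_2)$. The task is to construct compatible $n$-dimensional $\tilde{\bm\Lambda}_k$, $\tilde{\bm q}_k$, and scalars $\tilde t_k,\tilde\tau_k$ that certify every constraint of $M_n$.

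First, I extend the SDP dual variables from $\mathbb{R}^r$ to $\mathbb{R}^n$ block-diagonally: $\tilde{\bm\Lambda}_k=\bm\Lambda_k^*\oplus\bm L_k\in\mathbb{S}_+^n$ and $\tilde{\bm q}_k=[\bm q_k^{*\top},\bm 0^\top]^\top$, where $\bm L_k\in\mathbb{S}_+^{n-r}$ should absorb precisely the new rows/columns appearing in $M_n$ but not in $M_r$, namely the tails $\bm g_{kj}$ and $\bm h_{kj}$. A convenient candidate is
\[
\bm L_k \;=\; \sum_{j}\Bigl(\tfrac{\bm g_{kj}\bm g_{kj}^{\top}}{2||\bm g_{kj}||_2}+\tfrac{\bm h_{kj}\bm h_{kj}^{\top}}{2||\bm h_{kj}||_2}\Bigr),
\]
so that $tr(\bm L_k)=\tfrac{1}{2}\sum_{j}(||\bm g_{kj}||_2+||\bm h_{kj}||_2)$, the very quantity appearing on the right-hand side of the claimed bound. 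With this extension, the leading $r$-dimensional data in $\bm S_k^{\top}(y_k\bm w^*+\bm v_{kj}^*)$ reproduces the $M_r$-SDP data, while its trailing $(n-r)$-dimensional data equals $\bm g_{kj}$ (and analogously $\bm h_{kj}$).

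Second, I verify the two SDP constraints of $M_n$ at this extended point. Decomposing each $M_n$ block matrix as (i) the $M_r$-SDP matrix embedded with zeros in the new block, which is PSD by $M_r$-feasibility, plus (ii) a correction block $\begin{bmatrix}\bm L_k & \tfrac{1}{2}\bm g_{kj}\\ \tfrac{1}{2}\bm g_{kj}^{\top} & \delta_{kj}^v\end{bmatrix}$, the key observation is $\bm L_k\succeq\tfrac{\bm g_{kj}\bm g_{kj}^{\top}}{2||\bm g_{kj}||_2}$. A rank-one Schur complement computation then shows the correction block is PSD once $\delta_{kj}^v\geq\tfrac{||\bm g_{kj}||_2}{2}$, and similarly $\delta_{kj}^u\geq\tfrac{||\bm h_{kj}||_2}{2}$ on the $\bm u$-side. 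These quantities $\delta_{kj}^v,\delta_{kj}^u$ are precisely the gaps between the $(2,2)$ entries of the $M_n$ and $M_r$ SDP constraints, which are supplied by the increment $\tilde\xi_k-\xi_k^*$ together with any forced shift in $\tilde t_k$ or $\tilde\tau_k$.

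The main obstacle is the moment constraint $\tilde t_k+\gamma_{k2}tr(\tilde{\bm\Lambda}_k)+\sqrt{\gamma_{k1}}||\tilde{\bm q}_k||_2+\tilde r_k\leq\varepsilon\tilde\tau_k$: since $tr(\tilde{\bm\Lambda}_k)=tr(\bm\Lambda_k^*)+tr(\bm L_k)$, the added trace $tr(\bm L_k)$ must be absorbed either by raising $\tilde\tau_k$ or by lowering $\tilde t_k$, and either modification propagates into the $(2,2)$ entry of the SDP and hence into the required $\tilde\xi_k$. The delicate step is to show that, exploiting the Schur slack $c_{kj}^{(r)*}-\tfrac{1}{4}\bm p_{kj}^{(r)*\top}\bm\Lambda_k^{*\dagger}\bm p_{kj}^{(r)*}\geq 0$ in the $M_r$-optimal solution together with the chosen structure of $\bm L_k$, the propagated cost can be bounded by $\tfrac{1}{2}\sum_{j}(||\bm g_{kj}||_2+||\bm h_{kj}||_2)$. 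Summing the slack increments over $k=1,2$ and evaluating the objective $\tfrac{1}{2}||\bm w^*||_2^2+C(\tilde\xi_1+\tilde\xi_2)$ of $M_n$ at this feasible point then yields $v^*(n)\leq v^*(r)+\tfrac{C}{2}\sum_{k,j}(||\bm g_{kj}||_2+||\bm h_{kj}||_2)$, as desired.
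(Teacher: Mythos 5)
Your construction is exactly the paper's: the same block-diagonal extension $\tilde{\bm{\Lambda}}_k=\bm{\Lambda}_k^*\oplus\bm{L}_k$ with your $\bm{L}_k$ equal to the paper's $\bm{Q}_k$, the same zero-padding of $\bm{q}_k^*$, the same slack inflation $\tilde\xi_k=\xi_k^*+s_k$ with $s_k=\tfrac12\sum_j(\|\bm{g}_{kj}\|_2+\|\bm{h}_{kj}\|_2)$, and the same decomposition of each $M_n$ SDP block into the embedded $M_r$ block plus a PSD correction block with $\bm{Q}_k$, $\tfrac12\bm{g}_{kj}$ and $s_k$ as its entries. Up to that point your proposal and the paper's proof coincide, including the rank-one/Schur-complement justification that the correction block is PSD.

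The genuine gap is the step you yourself label the ``main obstacle'' and then leave as an assertion. You correctly note that $tr(\tilde{\bm{\Lambda}}_k)=tr(\bm{\Lambda}_k^*)+s_k$, so the first constraint $t_k+\gamma_{k2}tr(\bm{\Lambda}_k)+\sqrt{\gamma_{k1}}\|\bm{q}_k\|_2+r_k\leqslant\varepsilon\tau_k$ picks up an extra $\gamma_{k2}s_k$ on the left; you propose to absorb it by shifting $\tilde t_k$ or $\tilde\tau_k$, but never carry this out, and it is not clear it can be done within the claimed bound. Lowering $t_k$ by $\gamma_{k2}s_k$ also lowers the $(2,2)$ entry of the second family of SDP blocks (the $\bm{u}_{kj}$ ones), which contain no $\xi_k$ to compensate; raising $\tau_k$ by $\gamma_{k2}s_k/\varepsilon$ forces an additional increase of $\xi_k$ by $\gamma_{k2}s_k/\varepsilon$ in the first family of blocks, which would change the constant in (\ref{eq:gap}) to $C\sum_k(1+\gamma_{k2}/\varepsilon)\,s_k$. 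For what it is worth, the paper's own proof does not resolve this either: it keeps $t_k^*$ and $\tau_k^*$ unchanged, declares that only the SDP constraints need checking, and never verifies the trace constraint, which is violated whenever that constraint is active at the $M_r$ optimum, $\gamma_{k2}>0$ and $\bm{Q}_k\neq 0$. So your instinct that this is the delicate step is sound, but as written neither your argument nor the paper's closes it, and your proof is incomplete precisely there.
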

\begin{proof}
Let
\begin{equation*}
\begin{aligned}
&\bm{\Lambda}_k  = \begin{bmatrix} \bm{\Lambda}_k^* & \bm{0}_{r ,n-r} \\  \bm{0}_{n-r,r} & \bm{Q}_k \end{bmatrix}, \text{ where }\bm{Q}_k = \frac{1}{2}\sum \limits_{j} \left(\frac{\bm{g}_{kj} (\bm{g}_{kj})^\top}{||\bm{g}_{kj}||_2} + \frac{\bm{h}_{kj} (\bm{h}_{kj})^\top}{||\bm{h}_{kj}||_2}\right), \\
& \bm{q}_k  = \begin{bmatrix} \bm{q}^*_k \\ \bm{0}_{n-r,1} \end{bmatrix}, 
%t  = t^* + \frac{1}{2} \sum \limits_{k=1}^K \||\symbf{s}_k\||_2.
 \xi_k = \xi_k^* + s_k, \text{ where } s_k = \frac{1}{2}\sum \limits_{j} (||\bm{g}_{kj}||_2 + ||\bm{h}_{kj}||_2).
\end{aligned}
\end{equation*}
Then it can be verified that $(\bm{w}^{*},b^*,\bm{\xi},\bm{\Lambda}_k,\bm{q}_k,\bm{v}_{kj}^*,\bm{u}_{kj}^*,t_k^*,r_k^*,\tau_k^*)$ is a set of feasible solutions of the model $M_n$. We only need to test and verify the SDP constraints.

{\small
\begin{equation*}
\begin{aligned}
& \begin{bmatrix}
\bm{\Lambda}_k & \frac{1}{2} \left(\bm{q}_k+ \bm{S}_k^{(r)\top} (y_k \bm{w}^*+\bm{v}^*_{kj})\right) \\[1em]
\frac{1}{2} \left(\bm{q}_k+ \bm{S}_k^{(r)\top} (y_k \bm{w}^*+\bm{v}^*_{kj})\right)^\top &
\begin{aligned} & t_k^* - \tau_k^* + y_k(\bm{w}^{*\top} \bm{\mu}_k + b^*) - 1+\xi_k  \\ & - (\hat{\bm{x}}_{kj} - \bm{\mu}_k)^\top \bm{v}_{kj}^* -\sqrt{\bar{\gamma}_{kj}}||\bm{A}_{kj}^\top \bm{v}^*_{kj}||_q \end{aligned} 
\end{bmatrix}\\[1em]
= & \begin{bmatrix} 
 \bm{\Lambda}_k^* & \bm{0}_{r ,n-r} & \frac{1}{2}\left(\bm{q}_k^* + \bm{S}_k^{(r)\top} (y_k \bm{w}^*+\bm{v}^*_{kj}) \right)\\[1em]  
\bm{0}_{n-r,r} & \bm{Q}_k & \frac{1}{2} \bm{g}_{kj} \\[1em]
\frac{1}{2}\left(\bm{q}_k^* + \bm{S}_k^{(r)\top} (y_k \bm{w}^*+\bm{v}^*_{kj})\right)^\top & \frac{1}{2}  \bm{g}_{kj}^\top & \begin{aligned} & t_k^* - \tau_k^* + y_k(\bm{w}^{*\top} \bm{\mu}_k + b^*) - 1+\xi_k^* + s_k \\ & - (\hat{\bm{x}}_{kj} - \bm{\mu}_k)^T\top\bm{v}_{kj}^* -\sqrt{\bar{\gamma}_{kj}}||\bm{A}_{kj}^T\bm{v}^*_{kj}||_q \end{aligned} 
\end{bmatrix}\\[1em]
= & \begin{bmatrix} 
 \bm{\Lambda}_k^* & \bm{0}_{r,n-r} & \frac{1}{2}\left(\bm{q}_k^* + \bm{S}_k^{(r)\top} (y_k \bm{w}^*+\bm{v}^*_{kj}) \right)\\[1em]  
\bm{0}_{n-r,r} & \bm{0}_{n-r,n-r}  & \bm{0}_{n-r,1} \\[1em]
\frac{1}{2}\left(\bm{q}_k^* + \bm{S}_k^{(r)\top} (y_k \bm{w}^*+\bm{v}^*_{kj}) \right)^\top & \bm{0}_{1,n-r} & \begin{aligned} & t_k^* - \tau_k^* + y_k(\bm{w}^{*\top} \bm{\mu}_k + b^*) - 1+\xi_k^* \\ & - (\hat{\bm{x}}_{kj} - \bm{\mu}_k)^\top\bm{v}_{kj}^* -\sqrt{\bar{\gamma}_{kj}}||\bm{A}_{kj}^\top\bm{v}^*_{kj}||_q \end{aligned} 
\end{bmatrix}\\[1em]
+ & \begin{bmatrix} 
 \bm{0}_{r,r} & \bm{0}_{r ,n-r} & \bm{0}_{r,1}\\[1em]  
\bm{0}_{n-r,r} & \bm{Q}_k & \frac{1}{2} \bm{g}_{kj} \\[1em]
 \bm{0}_{1,r} & \frac{1}{2} \bm{g}_{kj}^\top & s_k
\end{bmatrix} \succeq 0, 
\end{aligned}
\end{equation*}
}
It is the same for other SDP constraints. 
Hence,
\begin{equation*}
\begin{aligned}
v^*(n) - v^*(r) & \leqslant \frac{1}{2}||\bm{w}^*||_2^2 + C(\xi_1 + \xi_2) - (\frac{1}{2}||\bm{w}^*||_2^2 + C(\xi_1^* + \xi_2^*)) \\
& = C(s_1+s_2)  = \frac{C}{2}\sum \limits_{k,j} (||\bm{g}_{kj}||_2 + ||\bm{h}_{kj}||_2).
\end{aligned}
\end{equation*}
\end{proof}

The upper bound of the gap $v^*(n)-v^*(r)$ given in Proposition \ref{prop:app_quality} shows that the approximation quality is controlled by the the smallest $n-r$ eigenvalues of $\bm{\Sigma}_k$.  Therefore, the approximation works well if some eigenvalues of the covariance matrices are extremely small.

\section{Numerical experiments}\label{sec:num}
In this section, we tested the performance of GDRC SVM model (\ref{eq:gdrcc_svm}) using the ambiguity set (\ref{eq:ambiguity}) and its approximation, denoted as GDRC-SVM and GDRC-SVM-app respectively, through several sets of numerical experiments. For comparison, we adopted the deterministic SVM model (\ref{eq:svm}), the DRC SVM model (\ref{eq:drcc_svm}) using the ambiguity set (\ref{eq:ambiguity_known}) and the DRC SVM model (\ref{eq:drcc_svm}) using the ambiguity set with known covariance matrix and mean perturbation $(E_{F_k^i}[\tilde{\bm{x}}_k^i]-\bm{\mu}_k^i)^{\top}(\bm{\Sigma}_k^{i})^{-1}(E_{F_k^i}[\tilde{\bm{x}}_k^i]-\bm{\mu}_k^i) \leqslant (\nu_k^i)^2$, denoted as SVM, DRC-SVM, DRC-mu-SVM, respectively. 

In the GDRC model, $\bm{\mu}_k$ and $\bm{\Sigma}_k$ are the sample mean and covariance matrix of the sample points in the $k$th class, $Y_{kj}$ and $\phi_{kj}$ are taken as (\ref{eq:setting}) with $p_{kj}=2$. For simplicity, let $m_1 = m_2 = 1$ and omit $j$. Let $\hat{\bm{x}}_1 = (1-\lambda)\bm{\mu}_1 + \lambda \bm{\mu}_2$, $\hat{\bm{x}}_2 =  \lambda \bm{\mu}_1 + (1-\lambda)\bm{\mu}_2$ ($0 \leqslant \lambda < 0.5$), $\bm{A}_k = \bm{\Sigma}_k^{1/2}$. The parameter $\lambda$ and radius $\bar{\gamma}_{k}$ are determined case by case, which can be determined by a cross validation procedure. In the DRC model, we follow some settings from \cite{svm:drcc:Wang}. Specifically, $\bm{\mu}_k^i$ is the value of the sample point and $\bm{\Sigma}_k^i = 0.01 \bm{\Sigma}_k$, $(\nu_k^i)^2 = \frac{n(N_0-1)}{N_0(N_0-n)}F^{-1}_{n,N_0-n}(0.9)$ with $N_0=100$, where $F^{-1}_{n,N_0-n}(0.9)$ is 0.9 quantile of the probability distribution $F(n,N_0-n)$. The common parameters are taken as $C=16$, $\varepsilon = 0.05$.

To quantify the performance of the classification hyperplane, we use classification accuracy defined by
\begin{equation}
acc = \frac{\sum_i \mathds{1}_{y_i^{pr} = y_i}}{N_{test}},
\end{equation}
where $N_{test}$ is the sample size of the testing dataset, $y_i^{pr}$ and $y_i$ are the predicted label and true label of $i$th testing data, respectively.

In this section, all numerical experiments were conducted using MATLAB (R2018b) software. Unless otherwise specified, the models in numerical experiments were solved by CVX \cite{cvx}.

\subsection{Simulated data}
\subsubsection{Impact of core sets}
 
In this subsection, two-dimensional data were generated, to visually show the performance of the GDRC SVM model and the impact of core sets. For $\pm 1$ class, the sample data were generated by normal distribution $N(\bm{\mu}_{\pm},\bm{I}_2)$ with $\bm{\mu}_{\pm} = \pm [1,1]^\top$. For each class, 50 sample points were generated, 10 of which were randomly selected as training data and the rest were used as testing data. 

Figure \ref{fig:coreset_center} shows the classification hyperplane and accuracy given by GDRC SVM model without core-set constraints and with different core sets. The blue and red circles are the points of +1 class and -1 class, respectively. The ones filled inside are training data and the rest are testing data. The black line is the classification hyperplane $\bm{w}^\top \bm{x}+b = 0$, and the blue and red dash lines are the margin lines $\bm{w}^\top \bm{x}+b = \pm 1$. The blue and red ellipsoids are the core sets for +1 class and -1 class, with the small solid stars as the centers, respectively. 

\begin{figure}[htbp]
\centering
\begin{tabular}{cc}
\begin{minipage}{0.45\textwidth}
\includegraphics[width=\textwidth]{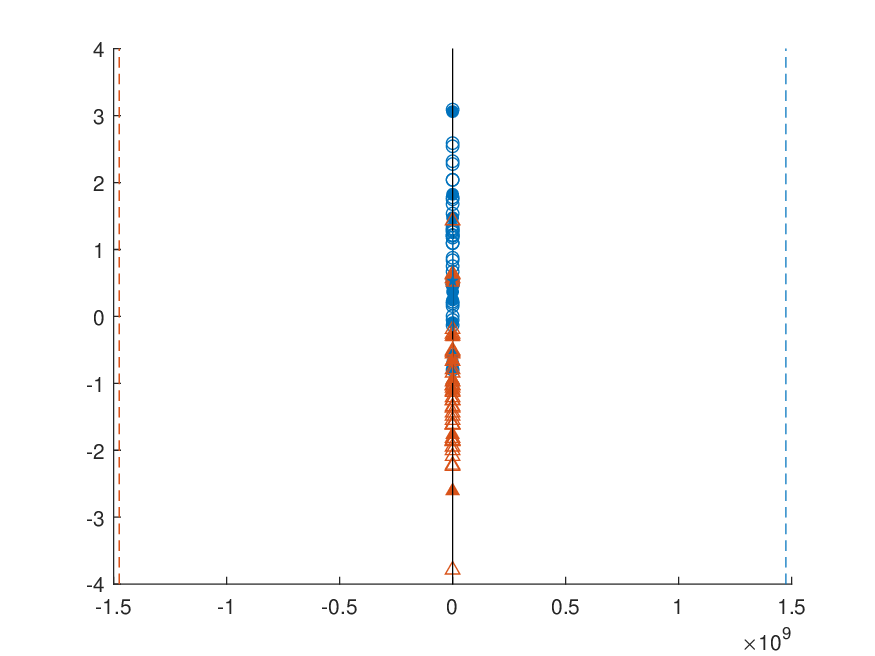}
\end{minipage} &
\begin{minipage}{0.45\textwidth}
\includegraphics[width=\textwidth]{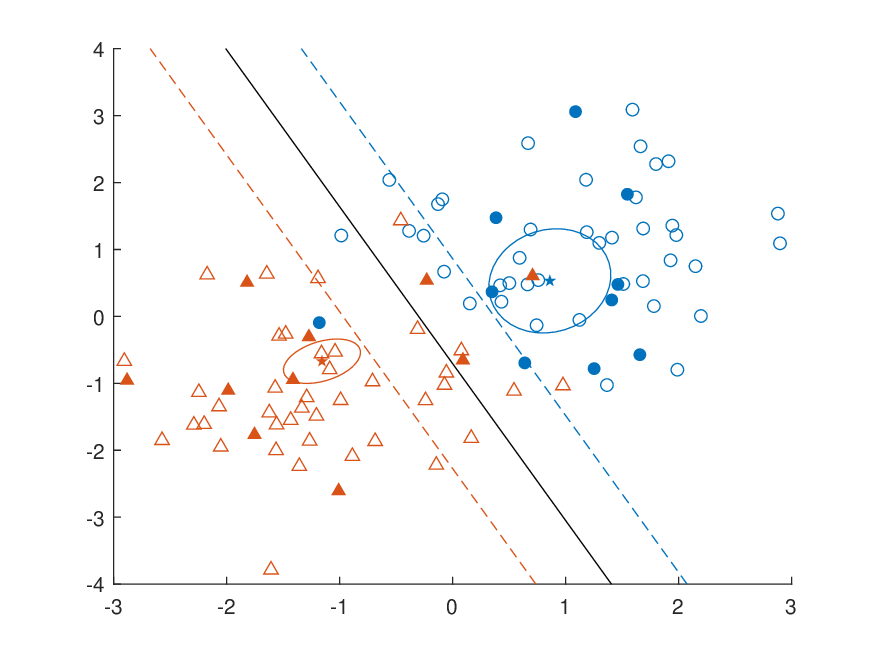}
\end{minipage}  \\
\scriptsize{\makecell{$\theta=0$. The robust region is too large that \\ the resulting $\bm{w}$ will be approximately 0.}} & \scriptsize{$\lambda=0$, acc=$93.75\%$} \\
\begin{minipage}{0.45\textwidth}
\includegraphics[width=\textwidth]{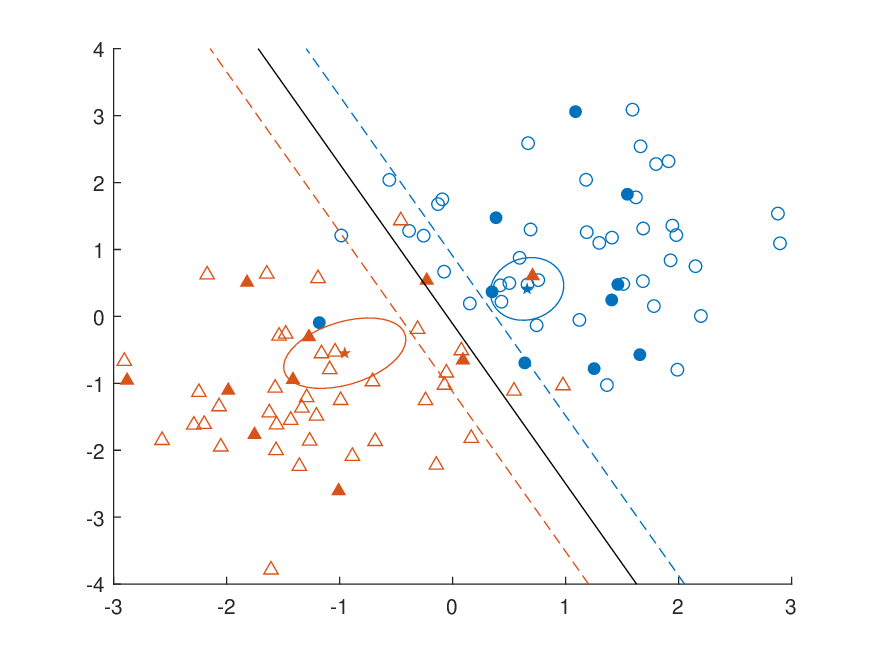}
\end{minipage} &
\begin{minipage}{0.45\textwidth}
\includegraphics[width=\textwidth]{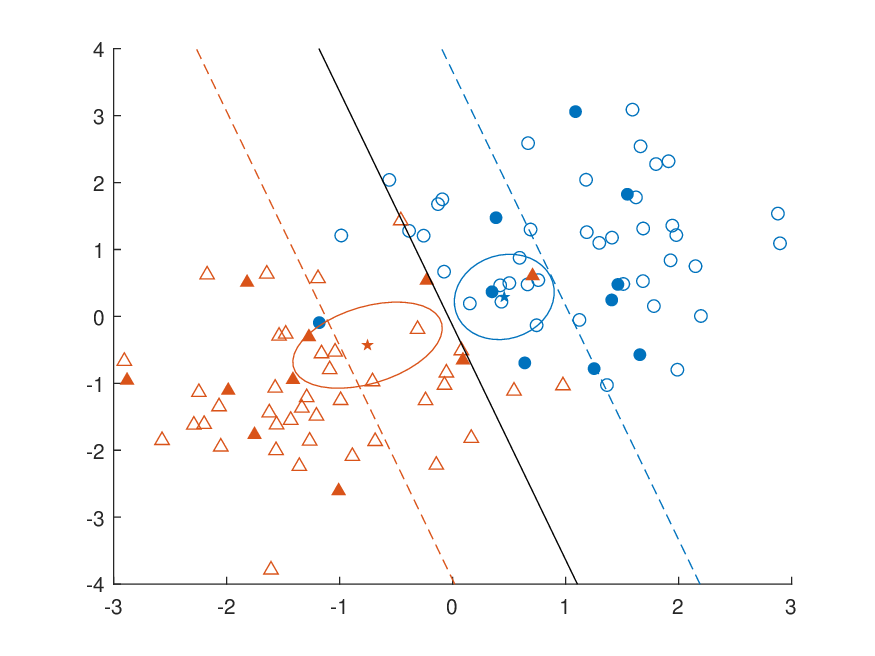}
\end{minipage} \\
\scriptsize{$\lambda=0.1$, acc=$95\%$} & \scriptsize{$\lambda=0.2$, acc=$96.25\%$}\\
\end{tabular}
\caption{Classification with different core sets}\label{fig:coreset_center}
\end{figure}

Given $\theta =0$, that means the core-set constraint vanishes in the ambiguity set of the GDRC SVM model, the optimal solution $\bm{w}^*$ becomes approximately 0 because the robust region is too large. It shows that the core-set constraint is essential and has an important impact on reducing the conservatism of the GDRC SVM model. Given $\theta = 400$, $\lambda=0,0.1,0.2$, the radius of the core set $\bar{\gamma}_{k}$ is given by the $\gamma$ such that $10\%$ training data and the sample mean are contained in $Y_k$. The larger $\lambda$ is, the closer the core set is to the classification hyperplane and the higher the accuracy is. It indicates that it is necessary and meaningful to pay more attention to the sample points near the classification hyperplane. The core sets help capture the important data and improve the classification performance.

If the center of the core set is fixed, the sizes of two core sets have an impact on the classification hyperplane. For the case $\lambda=0$ in Figure \ref{fig:coreset_center}, fix $\bar{\gamma}_1$ and enlarge $\bar{\gamma}_2$ to 2, 4 and 8 times of the original value respectively. Figure \ref{fig:coreset_radius} shows that the classification hyperplane gets closer and closer to the core set of class +1 (the blue ellipsoid), as the radius of the core set of class -1 becomes larger. Especially, though the classification hyperplanes given by ratio=8 and ratio=4 attain the same accuracy, their accuracies for each class are different. As the radius of the core set of class -1 becomes larger, the classification hyperplane attains higher accuracy for class -1. Therefore, it may help maintain fairness by choosing similar radii for two core sets when the centers of the core sets are determined with the same $\lambda$.

\begin{figure}[htbp]
\centering
\includegraphics[width=0.7\textwidth]{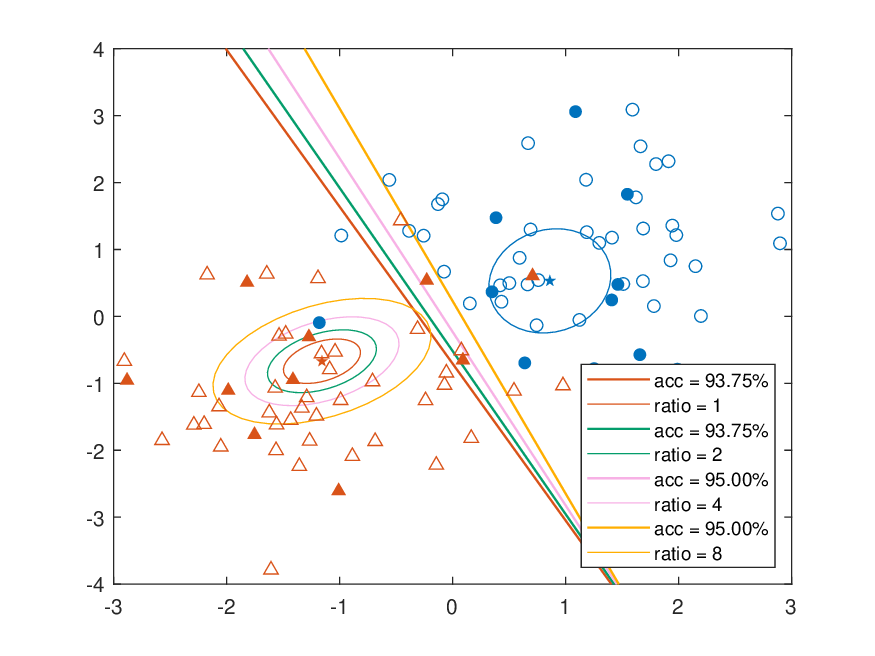}
\caption{Classification result with different $\bar{\gamma}_2$}\label{fig:coreset_radius}
\end{figure}

\subsubsection{Impact of sample size and dimension}

In this subsection, $n$-dimensional data were generated, to show the impact of the sample size and dimension and compare the performance of GDRC-SVM with that of SVM, DRC-SVM and DRC-mu-SVM. 
The sample data of $\pm 1$ class were generated by normal distribution $N(\bm{\mu}_{\pm},5\bm{I}_n)$ with $\bm{\mu}_{\pm} = \pm \mathbf{1}_n$, respectively. To test the impact of the sample size, fixed $n=30$ and $N=600$, for each class, $N/2$ sample points were generated, $20\%, 40\%, 60\%$ and $80\%$ of which were randomly selected as training data respectively. To test the impact of the data dimension, given $n=10,20,30,40,50$ respectively and fixed $N=600$, for each class, $N/2$ sample points were generated, $20\%$ of which were randomly selected as training data. The experiments were repeated 20 times. The mean and standard deviation of classification accuracy and calculation time are shown in Table \ref{table:samp_size} and \ref{table:dim}.

\begin{table}[tbhp]
\centering
\caption{Impact of sample size}\label{table:samp_size}
\setlength{\tabcolsep}{3mm}{
\resizebox{\textwidth}{22mm}{
\begin{tabular}{ccccccccccccc}
\toprule[1.5pt]
\multirow{2}{*}{$N_{train}$} &~& \multicolumn{2}{c}{SVM}& ~ & \multicolumn{2}{c}{DRC-SVM} & ~& \multicolumn{2}{c}{DRC-mu-SVM}& ~ & \multicolumn{2}{c}{GDRC-SVM} \\\cline{3-4} \cline{6-7} \cline{9-10} \cline{12-13} 
~ &~& acc($\%$) & time(s)& ~ & acc($\%$) & time(s)& ~ & acc($\%$) & time(s) & ~& acc($\%$) & time(s) \\
\midrule[1pt]
$20\%N$ & mean & 97.82 & 0.96  & ~ & 97.60  & 3.40 & ~ & 97.63 & 3.38 & ~ & \textbf{98.43} & 5.78 \\ 
~ & std & 0.69 & 0.06  & ~ & 0.57  & 0.14 & ~ & 0.57 & 0.14 & ~ & 0.48 & 1.05  \\ 
$40\%N$ & mean & 98.60 & 1.45  & ~ & 98.58  & 7.23 & ~ & 98.49 & 7.21 & ~ & \textbf{98.99} & 4.05 \\ 
~ & std & 0.46 & 0.10  & ~ & 0.48  & 0.06 & ~ & 0.57 & 0.06 & ~ & 0.40 & 0.73  \\ 
$60\%N$ & mean & 97.79 & 2.08  & ~ & 97.73  & 11.83 & ~ & 97.77 & 11.75 & ~ & \textbf{99.13} & 4.12 \\ 
~ & std & 1.13 & 0.04   & ~ & 1.13  & 0.10 & ~ & 1.13 & 0.07 & ~ & 0.49 &  0.70 \\ 
$80\%N$ & mean & 98.63 & 3.18  & ~ & 98.38  & 17.41 & ~ & 98.29 & 17.28 & ~ & \textbf{99.25} & 3.99  \\ 
~ & std & 1.09 & 0.09  & ~ & 1.28  & 0.22 & ~ & 1.28 & 0.22 & ~ & 0.66 & 0.48  \\ 
\bottomrule[1.5pt]
\end{tabular} 
}}
\end{table} 

\begin{table}[tbhp]
\centering
\caption{Impact of data dimension}\label{table:dim}
\setlength{\tabcolsep}{3mm}{
\resizebox{\textwidth}{26mm}{
\begin{tabular}{ccccccccccccc}
\toprule[1.5pt]
\multirow{2}{*}{Dimension} &~& \multicolumn{2}{c}{SVM}& ~ & \multicolumn{2}{c}{DRC-SVM} & ~& \multicolumn{2}{c}{DRC-mu-SVM}& ~ & \multicolumn{2}{c}{GDRC-SVM} \\\cline{3-4} \cline{6-7} \cline{9-10} \cline{12-13} 
~ &~& acc($\%$) & time(s)& ~ & acc($\%$) & time(s)& ~ & acc($\%$) & time(s) & ~& acc($\%$) & time(s) \\
\midrule[1pt]
$n=10$ & mean & 90.46 & 0.99  & ~ & 91.74  & 3.04 & ~ & 91.61 & 2.98 & ~ & \textbf{92.20} & 0.98 \\ 
~ & std & 1.62 & 0.28  & ~ & 0.79  & 0.17 & ~ & 0.79 & 0.15 & ~ & 0.75 & 0.19  \\ 
$n=20$ & mean & 94.30 & 1.02  & ~ & 93.82  & 3.23 & ~ & 94.33 & 3.22 & ~ & \textbf{96.75} & 1.64 \\ 
~ & std & 1.63 & 0.15  & ~ & 1.50  & 0.23 & ~ & 1.31 & 0.28 & ~ & 0.59 & 0.19  \\ 
$n=30$ & mean & 96.81 & 1.02  & ~ & 96.44  & 3.48 & ~ & 96.38 & 3.58 & ~ & \textbf{97.56} & 6.09 \\ 
~ & std & 0.95 & 0.09  & ~ & 0.92  & 0.16 & ~ & 0.93 & 0.33 & ~ & 0.83 &  0.55 \\ 
$n=40$ & mean & 99.05 & 0.94  & ~ & 98.80  & 4.16 & ~ & 98.76 & 4.13 & ~ & \textbf{99.65} & 10.98 \\ 
~ & std & 0.44 &  0.06 & ~ & 0.50  & 0.15 & ~ & 0.48 & 0.12 & ~ & 0.20 & 1.74  \\ 
$n=50$ & mean & 99.61 & 0.95  & ~ &  99.54 & 5.05 & ~ & 99.51 & 5.06  & ~ & \textbf{99.74} & 31.81 \\ 
~ & std & 0.23 & 0.03  & ~ & 0.31  & 0.16 & ~ & 0.30 & 0.14 & ~ & 0.21 & 2.13  \\ 
\bottomrule[1.5pt]
\end{tabular} 
}}
\end{table}

The bold values indicate the best performance of GDRC-SVM. Actually GDRC-SVM not only attains the highest average classification accuracy, but also provides the smallest standard deviation, which demonstrates the robustness of GDRC-SVM. As the size of training sample increases, the calculation time of SVM, DRC-SVM and DRC-mu-SVM obviously increases, whereas the calculation time of GDRC-SVM is almost unaffected by the sample size. Therefore, the GDRC SVM model is a great choice when the sample size is large. However, the calculation time of GDRC-SVM rapidly increases as the data dimension increases. Hence, the approximation of high-dimensional GDRC-SVM is needed to reduce the calculation time.

\subsubsection{Performance of approximation}
In this subsection, $n$-dimensional data were generated, to test the performance of the approximation of GDRC-SVM proposed in Section \ref{sec:app}. 
The sample data of $\pm 1$ class were generated by normal distribution $N(\bm{\mu}_{\pm},10\bm{I}_n)$ with $\bm{\mu}_{\pm} = \pm \mathbf{1}_n$, respectively. Fixed $n=50$ and $N$=1 000, for each class, $N/2$ sample points were generated, $20\%$ of which were randomly selected as training data. To test the approximation quality, we considered the approximations of $r = 100\%n, 50\%n, 30\%n, 10\%n$, respectively. 
The experiments were repeated 20 times. The results are shown in Table \ref{table:app_s}, where ``val" means the optimal value of the model, ``gap" means the true gap $v^*(n)-v^*(r)$, and ``ub" means the theoretical upper bound (\ref{eq:gap}) given in Proposition \ref{prop:app_quality}.

\begin{table}[tbhp]
\centering
\caption{Performance of approximation model}\label{table:app_s}
\setlength{\tabcolsep}{2mm}{
\resizebox{\textwidth}{22mm}{
\begin{tabular}{ccccccccc}
\toprule[1.5pt]
\multirow{2}{*}{Model} & \multicolumn{2}{c}{acc($\%$)} &~&  \multicolumn{2}{c}{time(s)} &  \multirow{2}{*}{val} & \multirow{2}{*}{gap} & \multirow{2}{*}{ub} \\ \cline{2-3} \cline{5-6}
~ & mean & std &~&  mean & std & ~ & ~ & ~ \\
\midrule[1pt]
%SVM & 96.99 & 0.70 &~& 1.33 & 0.06 &  - & - & - \\
GDRC-SVM & 98.58 & 0.28 &~& 34.50 & 4.66 & 0.0356 & - & - \\
GDRC-SVM-app(100$\%$) & 98.58 & 0.28 &~& 13.69 & 0.89 & 0.0357 & - & - \\
GDRC-SVM-app(50$\%$) & 98.49 & 0.29 &~& 4.42 & 0.21 & 0.0158 & 0.0199  & 15.0247 \\
GDRC-SVM-app(30$\%$) & 98.51 & 0.34 &~& 2.11 & 0.11 & 0.0128 & 0.0228 & 15.2336 \\
GDRC-SVM-app(10$\%$) & 98.57 & 0.40 &~& 0.96 & 0.04 & 0.0108 & 0.0249 & 12.2731 \\
\bottomrule[1.5pt]
\end{tabular} 
}}
\end{table}

The calculation time of GDRC-SVM-app significantly decreases as the approximation dimension $r$ decreases. The calculation time of GDRC-SVM-app(100$\%$) is less than that of GDRC-SVM, probably because there are fewer matrix multiplications when solving the model (\ref{eq:svm_gdrcc_norm_app}). 
Additionally, when $n=r$, the optimal values of GDRC-SVM-app and GDRC-SVM should be equal. The small deviation may be caused by errors in the calculation. 
The theoretical upper bound given by Proposition \ref{prop:app_quality} is much higher than the true gap. The theoretical upper bound of the smallest $r$ dimensional approximation is smallest, which also indicates that this upper bound is not a good estimation. 
Thus, the theoretical upper bound in Proposition \ref{prop:app_quality} may potentially be improved. 
The SVM combined with PCA may improve the generalization ability, but it depends on the characteristics of the dataset. Meanwhile, some important information may be lost after the dimension reduction via PCA. In this set of experiments, the approximation model with fewer principal components attains higher accuracy, whereas the robustness of the approximation model decreases as the number of principal components decreases.

\subsection{Real dataset}
Besides the simulated data, we also tested GDRC-SVM using some well known datasets, including Wisconsin breast cancer, Balance-scale and Ionosphere from UCI databases\cite{uci}, and Mushrooms and Cod-RNA from LIBSVM\cite{libsvm}. The summary of these datasets is listed in Table \ref{table:datasets}. In this subsection, we used $20\%$ of data points for training and the rest for testing. Each group of numerical experiments was repeated 20 times and the mean and standard deviation of classification accuracy and calculation time were recorded.

The datasets Wisconsin breast cancer and Balance-scale were tested to compare the performance of SVM, DRC-SVM, DRC-mu-SVM and GDRC-SVM. The mean and standard deviation of classification accuracy and calculation time are shown in Table \ref{table:real_res}. 
The numerical experiments on the datasets Ionosphere and Mushrooms mainly show the performance of the approximation of GDRC SVM model. When using the Mushrooms dataset, the models DRC-SVM, DRC-mu-SVM and GDRC-SVM were not tested due to too long calculation time required for the high-dimensional model. The results are shown in Table \ref{table:app_r}. 
The sample size of the dataset Cod-RNA is very large. Hence, we only compared the model SVM and GDRC-SVM on the dataset Cod-RNA. Especially, SVM was solved by LIBSVM in this group of experiments, because solving the model (\ref{eq:svm}) via CVX required much more time. The results are shown in Table \ref{table:rna}.
\begin{table}[tbhp]
\centering
\caption{Summary of the datasets}\label{table:datasets}
\setlength{\tabcolsep}{2mm}{
\resizebox{\textwidth}{12mm}{
\begin{tabular}{ccccccc}
\toprule[1.5pt]
Data set & ~ & Wisconsin breast cancer & Balance-scale & Ionosphere & Mushrooms & Cod-RNA \\
\midrule[1pt]
Dimension & $n$ & 9 & 4 & 34  & 112 & 8 \\[0.5em]
\multirow{2}{*}{Sample size} & $N_+$ & 239 & 288 & 225 & 3916 & 19845 \\
~ & $N_-$ & 444 & 288 & 126 & 4208 & 39690 \\
\bottomrule[1.5pt]
\end{tabular} 
}}
\end{table}

\begin{table}[tbhp]
\centering
\caption{Numerical results on the real data sets}\label{table:real_res}
\setlength{\tabcolsep}{2mm}{
\resizebox{\textwidth}{22mm}{
\begin{tabular}{ccccccccccccc}
\toprule[1.5pt]
\multirow{2}{*}{Model} & ~ & \multicolumn{5}{c}{Wisconsin breast cancer} & ~ & \multicolumn{5}{c}{Balance-scale} \\ \cline{3-7} \cline{9-13} 
\addlinespace[2pt]
~ & ~ & \multicolumn{2}{c}{acc($\%$)}  & ~ & \multicolumn{2}{c}{time(s)}&  ~ & \multicolumn{2}{c}{acc($\%$)} & ~ & \multicolumn{2}{c}{time(s)} \\\cline{3-4} \cline{6-7} \cline{9-10} \cline{12-13} 
\addlinespace[2pt]
~ & ~ & mean & std  & ~ & mean & std & ~ & mean & std  & ~ & mean & std \\
\midrule[1pt]
SVM & ~ & 94.24 & 1.84  & ~ &  0.90  & 0.04 & ~ & 94.78 & 0.75  & ~ & 0.80  & 0.03  \\
DRC-SVM & ~ & 95.40  & 1.35   & ~ & 3.08 & 0.04 & ~ & 95.10 & 0.88  & ~ & 2.61 & 0.03 \\ 
DRC-mu-SVM & ~ & 95.69 & 1.23  & ~ & 3.08 & 0.03 & ~ & 94.96 & 0.87  & ~ & 2.60 & 0.03 \\
GDRC-SVM & ~ & \textbf{95.79} & 0.79  & ~ & 0.81 & 0.03 & ~ & \textbf{95.15} & 0.68  & ~ & 0.81 & 0.05 \\ 
\bottomrule[1.5pt]
\end{tabular} 
}}
\end{table}

\begin{table}[tbhp]
\centering
\caption{Numerical results with approximation model }\label{table:app_r}
\setlength{\tabcolsep}{2mm}{
\resizebox{\textwidth}{25mm}{
\begin{tabular}{ccccccccccccc}
\toprule[1.5pt]
\multirow{2}{*}{Model} & ~ & \multicolumn{5}{c}{Ionosphere} & ~ & \multicolumn{5}{c}{Mushrooms} \\ \cline{3-7} \cline{9-13} 
\addlinespace[2pt]
~ & ~ & \multicolumn{2}{c}{acc($\%$)}  & ~ & \multicolumn{2}{c}{time(s)}&  ~ & \multicolumn{2}{c}{acc($\%$)} & ~ & \multicolumn{2}{c}{time(s)} \\\cline{3-4} \cline{6-7} \cline{9-10} \cline{12-13} 
\addlinespace[2pt]
~ & ~ & mean & std  & ~ & mean & std & ~ & mean & std  & ~ & mean & std \\
\midrule[1pt]
SVM & ~ & 83.20 & 2.13  & ~ & 0.74 & 0.06 & ~ & \textbf{99.93} & 0.09  & ~ & 26.42 & 1.86 \\ 
DRC-SVM & ~ & 82.33 & 2.96  & ~ & 2.28 & 0.15 & ~ & - &  - & ~ & - & - \\ 
DRC-mu-SVM & ~ & 82.44 & 3.13  & ~ & 2.26 & 0.09 & ~ & - &  - & ~ & - & - \\ 
GDRC-SVM & ~ & 84.96 & 1.09  & ~ & 8.04 & 1.15 & ~ & - & -  & ~ & - & - \\ 
GDRC-SVM-app(50$\%$) & ~ & \textbf{85.30} & 0.95  & ~ & 0.95 & 0.07 & ~ & 99.88 & 0.13  & ~ & 39.87 & 2.11 \\ 
GDRC-SVM-app(20$\%$) & ~ & 85.11 & 1.63  & ~ & 0.67 & 0.04 & ~ & 99.23 & 0.30  & ~ & 1.74 & 0.09 \\ 
\bottomrule[1.5pt]
\end{tabular} 
}}
\end{table} 

\begin{table}[tbhp]
\centering
\caption{Numerical results on Cod-RNA dataset}\label{table:rna}
\begin{tabular}{ccccccc}
\toprule[1.5pt]
\multirow{2}{*}{Model} & ~ & \multicolumn{2}{c}{acc($\%$)}  & ~ & \multicolumn{2}{c}{time(s)}\\\cline{3-4} \cline{6-7} 
\addlinespace[2pt]
~ & ~ & mean & std  & ~ & mean & std \\
\midrule[1pt]
SVM & ~ & 92.90 & 0.20 & ~ & 23.92  & 4.76  \\
GDRC-SVM & ~ & \textbf{93.46} & 0.10 & ~ & 0.93 & 0.08  \\  
\bottomrule[1.5pt]
\end{tabular} 
\end{table} 

The accuracy results shown in Table \ref{table:real_res}, \ref{table:app_r} and \ref{table:rna} demonstrate that GDRC SVM model can achieve more accurate classifications and stronger robustness than other tested SVM models on most of datasets. From the perspective of calculation time, when the dataset dimension is low, GDRC-SVM outperforms other tested SVM models, especially when the dataset has large sample size; when the dataset dimension is high, the approximation of GDRC SVM model effectively reduces the calculation time.

Furthermore, the numerical results on the dataset Ionosphere show that the approximation based on PCA can achieve higher accuracy and greater robustness than the original GDRC SVM model if the number of principal components is well determined. 
For the datasets Ionosphere and Mushrooms, the approximation with $50\%$ principal components performs better than the approximation with $20\%$ principal components, demonstrating that too few principal components may reduce the classification accuracy and robustness of the model. However, sacrificing a little classification accuracy can shorten the calculation time, especially when the data dimension is high. Hence, there is a trade-off between the classification quality and calculation time when using the approximation model of GDRC SVM.

Combining the results shown in Table \ref{table:real_res} and \ref{table:rna}, the GDRC SVM model takes less time to find the optimal classification hyperplane when the data dimension is small, regardless of the sample size. The fact that the calculation time does not depend on the sample size makes the GDRC SVM model more suitable for massive datasets. 

Overall, the numerical results on the datasets listed in Table \ref{table:datasets} illustrate the great performance of GDRC SVM and its approximation and indicate the potential value of GDRC SVM on various datasets.

\section{Conclusion}\label{sec:cons}
This study introduced a globalized distributionally robust chance-constrained SVM model, where the globalization means considering the uncertainties in the sample population for each class of data in modelling. In GDRC SVM model, the core sets are constructed to capture some regions near the potential classification hyperplane, so that the core-set constraint is not only helpful to reduce the conservatism but also to improve the classification accuracy. We obtain the equivalent SDP reformulation for the GDRC SVM model when the core sets and distance functions are properly chosen. In order to efficiently handle the high-dimensional dataset, the approximation based on the principal component analysis is given, which may also help improve the classification accuracy with appropriate choice of principal components. Numerical experiments show the great performance of GDRC SVM model and its approximation from the perspective of classification accuracy, the robustness of the model and the calculation time.

\clearpage
\bibliographystyle{plain}
\bibliography{ref}

\end{document}